\newtheorem{theorem}{Theorem}
\newtheorem{proposition}[theorem]{Proposition}
\newtheorem{lemma}[theorem]{Lemma}
\newtheorem{claim}[theorem]{Claim}
\theoremstyle{remark}
\newcommand{\Tr}{\mbox{\rm Tr}}
\newcommand{\Id}{\ensuremath{\mathop{\rm Id}\nolimits}}
\newcommand{\C}{\ensuremath{\mathbb{C}}}
\newcommand{\N}{\ensuremath{\mathbb{N}}}
\newcommand{\R}{\ensuremath{\mathbb{R}}}
\newcommand{\sch}{\mathsf{S}}
\newcommand{\schs}{\scriptstyle{\mathsf{S}}_1}
\newcommand{\scht}{\scriptstyle{\mathsf{S}}_2}
\newcommand{\schf}{\scriptstyle{\mathsf{S}}_\infty}
\DeclareMathOperator{\sign}{sgn}
\newcommand{\eps}{\varepsilon}
\newif\ifnotes\notesfalse
\newcounter{margincounter}
\definecolor{mygrey}{gray}{0.50}
\newcommand{\notename}[3]{{\textcolor{#2}{\footnotesize{\bf (#1: \# \arabic{margincounter}\addtocounter{margincounter}{1}} {#3}{\bf ) }}}}
\newcommand{\noteswarning}{{\begin{center} {\Large WARNING: NOTES ON}\end{center}}}
\newcommand{\onote}[1]{{\notename{Oded}{blue}{#1}}}
\newcommand{\tnote}[1]{{\notename{Thomas}{magenta}{#1}}}
\newcommand{\pnote}[1]{\footnote{\notename{Note}{mygrey}{#1}}}
\newcommand{\notename}[2]{{}}
\newcommand{\noteswarning}{{}}
\newcommand{\onote}[1]{}
\newcommand{\tnote}[1]{}
\newcommand{\pnote}[1]{}
\begin{document}

\title{Bounds on Dimension Reduction in the Nuclear Norm}

\author{
Oded Regev\thanks{Courant Institute of Mathematical Sciences, New York
 University. Supported by the Simons Collaboration on Algorithms and Geometry and by the National Science Foundation (NSF) under Grant No.~CCF-1814524. Any opinions, findings, and conclusions or recommendations expressed in this material are those of the authors and do not necessarily reflect the views of the NSF.}
\and
Thomas Vidick\thanks{Department of Computing and Mathematical Sciences, California Institute of Technology, Pasadena, USA. Supported by NSF CAREER Grant CCF-1553477, a CIFAR Azrieli Global Scholar award, and the Institute for Quantum Information and Matter, an NSF Physics Frontiers Center (NSF Grant PHY-1733907). Email: \texttt{vidick@cms.caltech.edu}.}
}

\maketitle
\noteswarning

\begin{abstract}
For all $n \ge 1$, we give an explicit construction of $m \times m$ matrices $A_1,\ldots,A_n$ with $m = 2^{\lfloor n/2 \rfloor}$ such that for any $d$ and $d \times d$ matrices $A'_1,\ldots,A'_n$ that satisfy 
\[ \|A'_i-A'_j\|_{\schs} \,\leq\, \|A_i-A_j\|_{\schs}\,\leq\, (1+\delta) \|A'_i-A'_j\|_{\schs} \]
for all $i,j\in\{1,\ldots,n\}$ and small enough $\delta = O(n^{-c})$, where $c>0$ is a universal constant, it must be the case that $d \ge 2^{\lfloor n/2\rfloor -1}$. 
This stands in contrast to the metric theory of commutative $\ell_p$ spaces, as it is known that for any $p\geq 1$, any $n$ points in $\ell_p$ embed exactly in $\ell_p^d$ for  $d=n(n-1)/2$.

Our proof is based on matrices derived from a representation of the Clifford algebra generated by $n$ anti-commuting Hermitian matrices that square to identity, and borrows ideas from the analysis of nonlocal games in quantum information theory. 
\end{abstract}
 
\section{Introduction}

For $p \ge 1$ let $\ell_p$ denote the space of real-valued sequences $x \in \R^\N$ with finite $p$-th norm $\|x\|_p = (\sum_{i} |x_i|^p)^{1/p}$.
For any $n \ge 1$ and any $x_1,\ldots,x_n \in \ell_2$ there exist $y_1,\ldots, y_n \in \ell_2^n$ such that
$\|x_i - x_j \|_2 = \|y_i - y_j\|_2$ for all $i,j \in \{1,\ldots, n\}$. This is immediate from the fact that any $n$-dimensional
subspace of Hilbert space is isometric to $\ell_2^n$. In fact, there even exist such $y_1,\ldots,y_n$ in $\ell_2^{n-1}$ by considering
the $n-1$ vectors $x_2-x_1,\ldots,x_n-x_1$. We can equivalently describe this as saying that any $n$ points in $\ell_2$ can
be isometrically embedded into $\ell_2^{n-1}$. The dimension $n-1$ is easily seen to be the best possible for
isometric embeddings.

The Johnson-Lindenstrauss lemma~\cite{johnson1984extensions} establishes the striking fact that if we allow a small amount of error $\delta >0$,
a much better ``dimension reduction'' is possible. 
Namely, for any $n \ge 1$, any points $x_1,\ldots,x_n\in \ell_2$, and any $0<\delta<1$, there exist 
$n$ points $y_1,\ldots,y_n \in \ell_2^{d}$ with $d = O(\delta^{-2} \log n)$ and 
such that 
for all $i,j\in\{1,\ldots,n\}$,
\begin{equation}\label{eq:jl-intro}
 \|y_i-y_j\|_2 \,\leq\, \|x_i-x_j\|_2\,\leq\, (1+\delta) \|y_i-y_j\|_2 \;.
\end{equation}
This can be described as saying that any $n$ points in $\ell_2$ can be embedded into $\ell_2^d$ 
with (bi-Lipschitz) distortion at most $1+\delta$. 
We remark that this bound on $d$ was recently shown to be tight~\cite{LarsenN17} for essentially all values of $\delta$ for which the bound is nontrivial.

The situation for other norms is not as well understood. 
Ball~\cite{Ball90} showed that for any $p \ge 1$ and any integer $n \ge 1$, any $n$ points in $\ell_p$ embed isometrically into $\ell_p^d$ 
for $d = n(n-1)/2$. He also showed that for $1\le p < 2$ this is essentially the best possible result. 
However, if we allow some $1+\delta$ distortion as in~\eqref{eq:jl-intro}, the situation again 
changes considerably. 
Specifically, for $p=1$, Talagrand~\cite{Talagrand90} 
(improving slightly on the earlier result by Schechtman~\cite{Schechtman87})
showed that for any $0< \delta< 1$, one can embed any $n$ points in $\ell_1$ into 
$\ell_1^d$ with $d \le C \delta^{-2} n \log n$ where here and in what follows $C$ is a universal constant that might vary at each occurrence.\footnote{In fact, he showed that one can even embed
any $n$-dimensional subspace of $\ell_1$ into $\ell_1^d$ with distortion $1+\delta$.}
See also~\cite{Schechtman87,BourgainLM89,Talagrand95} for extensions to other $p$ and more details.
The bound was improved by Newman and Rabinovich~\cite{NewmanR10} to 
$d \le C n / \delta^2$ (see~\cite{Naor12}), and if we allow large enough distortion $D>1$,
the bound can be further reduced to $d \le C n / D$~\cite{AndoniNN18}.
In terms of lower bounds, Brinkman and Charikar~\cite{BrinkmanC05} showed that there exist
$n$ points in $\ell_1$ (in fact, in $\ell_1^n$) such that any embedding with distortion $D>1$ into $\ell_1^d$
requires $d \ge n^{C/D^2}$. For embeddings with distortion $1+\delta$, 
Andoni et al.~\cite{AndoniCNN11} showed a bound of $d \ge n^{1-C/\log(1/\delta)}$.
See also~\cite{LeeN04,Regev12} for alternative proofs. 

Let $\sch_1$ be the space of bounded linear operators on a separable Hilbert space with finite Schatten-1 (or nuclear) norm $\|A\|_{\schs} = \sum_{i} \sigma_i(A)$, where $\{\sigma_i(A)\}$ are the singular values of $A$. We also write $\sch_1^m$ for the space of linear operators acting on an $m$-dimensional Hilbert space, equipped with the Schatten-1 norm. 
Our main theorem shows that dimension reduction in this noncommutative analogue of $\ell_1$ is strikingly different from
that in $\ell_p$ spaces. Namely, there are $n$ points that require \emph{exponential dimension}
in any embedding with sufficiently low distortion. In contrast, Ball's result mentioned above~\cite{Ball90}
shows that in $\ell_p$, any $n$ points embed isometrically into dimension $n(n-1)/2$. 

\begin{theorem}\label{thm:realmain}
For any $n \ge 1$, there exist $(2n+2)$ points in $\sch_1^m$, where $m=2^{\lfloor n/2 \rfloor}$, such that any embedding into $\sch_1^d$ with distortion $1+\delta$ for $\delta = C n^{-c}$ requires $d \ge 2^{\lfloor n/2 \rfloor -1}$, where $c,C>0$ are universal constants. 
\end{theorem}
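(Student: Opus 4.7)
The plan is to take the $2n+2$ points to be $\pm I,\pm X_1,\ldots,\pm X_n\in M_m(\C)$ with $m=2^{\lfloor n/2\rfloor}$, where $X_1,\ldots,X_n$ are the Hermitian anticommuting unitaries from an irreducible representation of the Clifford algebra. Since $(X_i\pm X_j)^2=2I$ for $i\ne j$, the pairwise distances in $\sch_1^m$ take only a handful of values: $\|X_i\pm X_j\|_{\schs}=m\sqrt{2}$ for $i\ne j$, $\|I-(-I)\|_{\schs}=\|X_i-(-X_i)\|_{\schs}=2m$, and $\|X_i\pm I\|_{\schs}=m$ (the last using tracelessness of $X_i$). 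These values are arranged so that the Clifford-type relations --- anticommutation, unitarity, tracelessness --- are essentially the only way to realize the pattern, even approximately.

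Suppose $A'_1,\ldots,A'_{2n+2}\in\sch_1^d$ realize these distances up to a factor of $1+\delta$. The first step is a \emph{centring} of the embedding. The saturated triangle identity $\|X_i-I\|_{\schs}+\|I-(-X_i)\|_{\schs}=2m=\|X_i-(-X_i)\|_{\schs}$ says that the image of $I$ lies almost exactly on a geodesic between the images of $\pm X_i$, for every $i$ at once. Exploiting the equality condition for the Schatten-$1$ triangle inequality (which forces the two differences to be, up to a common partial isometry, the positive and negative parts of a single Hermitian matrix), one can translate the embedding so that the image of $-A$ is approximately the negative of the image of $A$ for each of the points and so that the image of $I$ is approximately proportional to $I_d$. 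After rescaling we obtain $Y_1,\ldots,Y_n\in\sch_1^d$ with $\|Y_i\|_{\schs}\approx d$, $\|Y_i\pm Y_j\|_{\schs}\approx d\sqrt{2}$ for $i\ne j$, and $\|Y_i\pm I_d\|_{\schs}\approx d$.

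The bound $\|Y_i-I_d\|_{\schs}+\|Y_i+I_d\|_{\schs}\approx 2d\approx\|2I_d\|_{\schs}$ again saturates the triangle inequality, forcing the spectrum of $Y_i$ to concentrate near $\pm 1$; in particular $Y_i$ is an approximate Hermitian unitary of approximately zero trace. The Clifford identity $\|Y_i+Y_j\|_{\schs}^2+\|Y_i-Y_j\|_{\schs}^2\approx 2(\|Y_i\|_{\schs}^2+\|Y_j\|_{\schs}^2)$ --- which is what makes the setup noncommutatively rigid --- can then be converted into approximate Hilbert--Schmidt anticommutation $\tr((Y_iY_j+Y_jY_i)^2)=o(d)$, by pairing the Schatten-$1$ bounds with dual operator-norm inequalities for $Y_iY_j\pm Y_jY_i$. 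This leaves $Y_1,\ldots,Y_n$ as an approximate representation of the Clifford algebra on $\C^d$, and a quantitative rigidity theorem for such representations --- in the spirit of Gowers--Hatami stability for group representations, or of the approximate Clifford/Pauli rigidity analyses used in nonlocal games --- then gives $d\ge m/2=2^{\lfloor n/2\rfloor-1}$.

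The hardest step, and the source of the restriction $\delta=O(n^{-c})$, is tracking the error through the centring phase and through the passage from Schatten-$1$ to Hilbert--Schmidt approximation. The Schatten-$1$ norm is very forgiving: small $\sch_1$ perturbations can correspond to Frobenius- or operator-norm-large errors on low-rank components, so one cannot lift closeness in a black-box way. The argument must exploit the particular structure of the $Y_i$ --- spectral concentration near $\pm 1$ and the near-proportionality of the centre to $I_d$ --- to exclude the pathological perturbations that would otherwise collapse the Clifford rigidity.
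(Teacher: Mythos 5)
Your construction is, up to an affine change of coordinates, the same as the paper's: the paper uses the $2n+2$ points $0$, $\Id/m$, $P_{i,\pm}/m$, and yours are obtained by applying $A\mapsto 2A-\Id/m$ and rescaling, which gives exactly $\pm \Id/m$, $\pm C_i/m$. The overall arc --- saturated triangle inequalities force structure, a parallelogram-type identity forces anticommutation, Gowers--Hatami gives the dimension bound --- also matches the paper. However, there is a real gap at your ``centring'' step, and it is not a technical nuisance but the crux of the whole argument.

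You assert that by exploiting the equality case of the Schatten-$1$ triangle inequality (simultaneously for all $i$) one can translate and rescale so that the image of $\Id$ becomes \emph{approximately proportional to $\Id_d$}. This is false in general and the paper explicitly does not claim it. What the saturation conditions actually give is that, after subtracting the image of $0$ and polar-decomposing, the image of $\Id$ becomes some \emph{arbitrary} positive semidefinite trace-one operator $\sigma$, not a multiple of identity. The paper then carries $\sigma$ through all of Propositions~\ref{prop:proj} and~\ref{prop:ac}: one only gets $X_i \approx \sigma^{1/2}P_i\sigma^{1/2}$ and the anticommutator bound $\|\{A_i,A_j\}\sigma^{1/2}\|_F$ small, both weighted by $\sigma$. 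The danger your sketch does not address is that $\sigma$ could be nearly rank-deficient, in which case these bounds carry little information about the operators off the support of $\sigma$. The paper handles this with a separate and nontrivial step (Section~5, Lemma~\ref{lem:connes}), applying Connes's integral trick over the level sets $\chi_{\geq\sqrt{a}}(\sigma^{1/2})$ to extract a single subspace on which the $A_i$ both approximately anticommute and approximately preserve. Your remark that ``the argument must exploit the particular structure of the $Y_i$ to exclude the pathological perturbations'' names exactly this difficulty without resolving it.

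Two smaller issues in the same vein: (i) ``translate the embedding so that the image of $-A$ is approximately the negative of the image of $A$'' does not follow from the metric data alone --- in a generic metric embedding, the images of antipodal points need not be antipodal, and what one actually derives is the projection structure of Proposition~\ref{prop:proj}, in which $P_{i,+}+P_{i,-}=\Id$, not a literal sign flip of matrices; and (ii) the passage ``convert the Clifford identity into approximate Hilbert--Schmidt anticommutation by pairing Schatten-$1$ bounds with dual operator-norm inequalities'' is gesturing at what the paper makes precise in Claim~\ref{claim:cs-cor} (a quantitative near-equality version of Cauchy--Schwarz applied to $\sigma^{1/2}R\sigma^{1/2}$), but in your version it is not clear what the dual witness is or why the result is an $L^2$ anticommutator rather than merely a weak-$*$ statement. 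These would need to be filled in, again with $\sigma$ present, before the Gowers--Hatami step can be invoked.
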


The space $\sch_1$ is a major object of study in many areas of mathematics and physics; 
see~\cite{NaorPS18} for further details and references. One area where it plays an especially
important role is quantum mechanics, and specifically quantum information. This area, and specifically the theory of Bell inequalities and nonlocal games, served
as an inspiration for our proof and the source of our techniques.

The best previously known bound on dimension reduction in $\sch_1$
is due to Naor, Pisier, and Schechtman~\cite{NaorPS18}, who proved
a result analogous to that of Brinkman and Charikar~\cite{BrinkmanC05}.
Namely, they showed that there exist $n$ points in $\sch_1^n$ for which any embedding
into $\sch_1^d$ with distortion $D>1$ requires $d \ge n^{C/D^2}$.\footnote{Their result is actually much stronger, and incomparable to Theorem~\ref{thm:realmain}: they show that there is no 
embedding into any $n^{C/D^2}$-dimensional subspace of $\sch_1$ (and in fact, they even allow
quotients of $\sch_1$).}
The set of points they use is \pnote{Brinkman-Charikar use the diamond graph. Naor et al. use Laakso graphs, which are very similar, and they comment that the same holds also for the diamond graphs. I therefore think it's OK not to say ``essentially''.}the one used by Brinkman and Charikar~\cite{BrinkmanC05} through the natural identification of $\ell_1^n$ with the subspace of diagonal matrices in $\sch_1^n$. The effort then goes into showing that the bound in~\cite{BrinkmanC05}, which only applies to embeddings into diagonal matrices,
also applies to arbitrary matrices. 

In Lemma~\ref{lem:ub} we show that for any $0 < \delta < 1$ the metric space induced by the $(2n+2)$ points from Theorem~\ref{thm:realmain} can be embedded with distortion $(1+\delta)$   
in $\sch_1^d$ for $d = n^{O(1/\delta^2)}$. Therefore, in order to\pnote{We can still 
hope to prove that for $\delta=0.001$, our set of points requires dimension $n^{100}$. 
Funny how in our case there really is a threshold, where either the dimension is exponential, or we don't get anything. Having something more smooth seems to require ``different techniques'' to bound the dimension of $\eps$-representations of $C(n)$ for large-ish $\eps$.}
obtain exponential lower bounds with constant $\delta$
one would have to use a different set of points.

%

\paragraph{Proof overview.} 
Due to Ball's upper bound~\cite{Ball90}, our set of points cannot be in $\ell_1$, and 
in particular, cannot be the set used in previous work~\cite{BrinkmanC05,NaorPS18}.
Instead, we introduce a new set of $n$ points in $\sch_1^m$, for $m={2^{\lfloor n/2\rfloor}}$, and show that any embedding with $(1+\delta)$ distortion for small enough $\delta$ requires almost as large a dimension. To achieve this we use metric conditions on the set of $n$ points to derive algebraic relations on any operators that (approximately) satisfy the conditions. We then conclude by applying results on the dimension of (approximate) representations of a suitable algebra. 

We now describe our construction. Let $n$ be an even integer. For a matrix $A$ and an integer $i$, let $A^{\otimes i}$ denote the tensor product of $i$ copies of $A$. Let 
\[ 
 X = \begin{pmatrix} 0 & 1 \\ 1 &0 \end{pmatrix}\; \text{,} \qquad 
 Y = \begin{pmatrix} 0 & i \\ -i &0 \end{pmatrix}\; \text{, and} \qquad 
 Z = \begin{pmatrix} 1 & 0 \\ 0 & -1 \end{pmatrix}\;.\]
For $i\in \{1,\ldots,n/2\}$ let $C_{2i-1} = X^{\otimes(i-1)} \otimes Z \otimes \Id^{\otimes(n/2-i)}$ and $C_{2i} = X^{\otimes(i-1)} \otimes Y \otimes \Id^{\otimes(n/2-i)}$. Then the matrices $C_1,\ldots,C_n$ are Hermitian operators in $\sch_1^d$, where $d=2^{n/2}$.\footnote{For a construction over the reals, consider $C'_{2i-1} = C_{2i-1}\otimes \Id$ and $C'_{2i}=C_{2i}\otimes Y$.  For even values of $n$ congruent to $4$ or $6$ mod $8$ the doubling of the dimension is necessary~\cite{okubo1991real}.} Moreover, $C_i^2 = \Id$ for each $i\in\{1,\ldots,n\}$ and $\{C_i,C_j\} = C_iC_j+C_jC_i = 0$ for $i\neq j\in\{1,\ldots,n\}$. For $i\in\{1,\ldots,n\}$ let $P_{i,+}$ (resp., $P_{i,-}$) be the projection on the $+1$ (resp., $-1$) eigenspace of $C_i$. Using that $P_{i,+}$ and $P_{i,-}$ are orthogonal  trace $0$ projectors that sum to identity, it is immediate that 
\begin{equation}\label{eq:cond-intro-1}
\forall i\in\{1,\ldots,n\}\;,\qquad \frac{1}{d}\|P_{i,+}\|_{\schs} \,=\, \frac{1}{d}\|\Id - P_{i,+}\|_{\schs} \,=\, \frac{1}{d}\|P_{i,-}\|_{\schs} \,=\, \frac{1}{d}\|\Id-P_{i,-}\|_{\schs} \,=\, \frac{1}{2}\;,
\end{equation}
 and 
\begin{equation}\label{eq:cond-intro-2}
\forall i \in \{1,\ldots,n\}\;,\qquad  \frac{1}{d}\|P_{i,+}- P_{i,-}\|_{\schs} \,=\, 1\;.
\end{equation}
Finally, using  the anti-commutation property, it follows by an easy calculation that
\begin{equation}\label{eq:cond-intro-3}
 \forall i\neq j\in\{1,\ldots,n \},\quad \forall q,r\in\{+,-\}\;,\qquad  \frac{1}{d}\|P_{i,q}-P_{j,r}\|_{\schs} \,=\, \frac{\sqrt{2}}{2}\;.
\end{equation}
 Our main result is that~\eqref{eq:cond-intro-1},~\eqref{eq:cond-intro-2} and~\eqref{eq:cond-intro-3} characterize the algebraic structure of any operators that satisfy those metric relations, even up to distortion $(1+ \delta)$ for small enough $\delta = O(n^{-c})$. Using labels $O$ and $\sigma$ to represent $0$ and $\Id/d$, and $X_i$ and $Y_i$ to represent $P_{i,+}/d$ and $P_{i,-}/d$ respectively, we show the following.  

\begin{theorem}\label{thm:main}
Let $n,d \geq 1$ be integers, $0\leq \delta \leq 1$, and $O,\sigma$ and $X_1,Y_1,\ldots,X_n,Y_n$ operators on $\C^d$ satisfying that
for all $i \in \{1,\ldots,n\}$,
\begin{align*}
1-\delta \leq \|\sigma - O\|_{\schs} &\leq 1+\delta \; ,\\
\|X_i-O\|_{\schs} + \|\sigma-X_i\|_{\schs} &\leq 1+\delta\;,\\ 
\|Y_i-O\|_{\schs} + \|\sigma-Y_i\|_{\schs} &\leq 1+\delta\;, \\ 
\|X_i-Y_i\|_{\schs} &\geq 1-\delta\;, 
\end{align*}
and for all $1 \le i < j \le n$,
\begin{align}
\min\big\{ \|X_i-X_j\|_{\schs} ,\, \|X_i-Y_j\|_{\schs} ,\, \|Y_i-X_j\|_{\schs} ,\, \|Y_i-Y_j\|_{\schs}\big\} \,\geq\, (1-\delta) \frac{\sqrt{2}}{2}\;. \label{eq:intro-ac}
\end{align}
Then there is a universal constant $C>0$ and for $i\in\{1,\ldots,n\}$ orthogonal projections $P_{i,+}$ and $P_{i,-}$ on $\C^d$ such that $P_{i,+}+P_{i,-}=\Id$ such that 
if  $A_i = P_{i,+} - P_{i,-}$ then 
\begin{equation}\label{eq:ac-intro}
\forall i\neq j \in\{1,\ldots,n\}\;,\qquad \frac{1}{d}\,\big\| A_iA_j+A_jA_i\big\|_{\scht}^2 \,\leq\, C\,n^2\, \delta^{1/32}\;.
\end{equation}
\end{theorem}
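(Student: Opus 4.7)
The plan is to establish the conclusion through a chain of reductions, each incurring a square-root loss in $\delta$, which accumulates to the exponent $1/32$ in the final bound; throughout I write $\|\cdot\|_{\schs}, \|\cdot\|_{\scht}$ for the nuclear and Hilbert--Schmidt norms. First, translate by $-O$ to assume $O=0$; the hypotheses then give $\|\sigma\|_{\schs}\approx 1$ and $\|X_i\|_{\schs}+\|\sigma-X_i\|_{\schs}\approx\|\sigma\|_{\schs}$, so the nuclear-norm triangle inequality is near-tight along the decomposition $\sigma = X_i + (\sigma - X_i)$. A quantitative rigidity result for the equality case (equality in $\|A\|_{\schs}+\|B\|_{\schs}\ge\|A+B\|_{\schs}$ forces $A$ and $B$ to share a common polar unitary, hence to be simultaneously positive up to a global unitary rotation) lets me replace $\sigma, X_i, Y_i$ by positive operators $\tilde\sigma, \tilde X_i, \tilde Y_i$ satisfying $0\le\tilde X_i,\tilde Y_i\le\tilde\sigma$, within $O(\delta^{1/2})$ in nuclear norm. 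A second rigidity step, applied to the hypothesis $\|\tilde X_i-\tilde Y_i\|_{\schs}\ge 1-\delta$ combined with the bound $\|\tilde X_i-\tilde Y_i\|_{\schs}\le \|\tilde\sigma\|_{\schs}\approx 1$ (a Hölder argument writing $\tilde X_i=\tilde\sigma^{1/2}A\tilde\sigma^{1/2}$, $\tilde Y_i=\tilde\sigma^{1/2}B\tilde\sigma^{1/2}$ with $0\le A,B\le \Id$, which is tight only when $A$ is a projection and $B=\Id-A$), forces $\tilde X_i$ and $\tilde Y_i$ to have almost disjoint supports with $\tilde X_i+\tilde Y_i\approx\tilde\sigma$.

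Next, define $P_{i,+}$ as the spectral projection of $X_i-Y_i$ onto its non-negative part and $P_{i,-}=\Id-P_{i,+}$, so that $P_{i,+}+P_{i,-}=\Id$ by construction. The preceding step yields $\tr(P_{i,+})\approx d/2$ and $\|X_i-P_{i,+}/d\|_{\schs}, \|Y_i-P_{i,-}/d\|_{\schs}=O(\delta^{1/4})$; the metric conditions in~\eqref{eq:intro-ac} then transfer to
\[ \tfrac{1}{d}\|P_{i,+}-P_{j,+}\|_{\schs}\,\ge\,(1-O(\delta^{1/c}))\tfrac{\sqrt{2}}{2} \quad\text{and}\quad \tfrac{1}{d}\|P_{i,+}+P_{j,+}-\Id\|_{\schs}\,\ge\,(1-O(\delta^{1/c}))\tfrac{\sqrt{2}}{2}, \]
the second bound coming from the identity $X_i-Y_j\approx(P_{i,+}+P_{j,+}-\Id)/d$ after using $Y_j\approx\sigma-X_j$.

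Finally, decompose $\C^d$ into principal-angle blocks for the pair $(P_{i,+},P_{j,+})$: the eigenvalues of $P_{i,+}-P_{j,+}$ are $\pm\sin\theta_k$ on each two-dimensional block (plus $\pm 1$ on certain one-dimensional blocks), while those of $P_{i,+}+P_{j,+}-\Id$ are $\pm\cos\theta_k$ on each two-dimensional block (plus $\pm 1$ on the complementary one-dimensional blocks). The elementary inequality $|\sin\theta|+|\cos\theta|\le\sqrt{2}$, tight iff $\theta=\pi/4$, together with a dimension count gives $\|P_{i,+}-P_{j,+}\|_{\schs}+\|P_{i,+}+P_{j,+}-\Id\|_{\schs}\le d\sqrt{2}$, and the two bounds above force this sum to be within $O(\delta^{1/c})$ of saturation. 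A local expansion about $\theta=\pi/4$ then yields $\|(P_{i,+}-P_{j,+})^2-\tfrac{1}{2}\Id\|_{\scht}^2=O(\delta^{1/c})\,d$. Rewriting $(P_{i,+}-P_{j,+})^2=P_{i,+}+P_{j,+}-\{P_{i,+},P_{j,+}\}$ and substituting $P_{i,\pm}=(\Id\pm A_i)/2$ translates this directly into $\|\{A_i,A_j\}\|_{\scht}^2=O(\delta^{1/c})\,d$, which after absorbing the polynomial $n^2$ factor from the error propagation uniform across the $2n$ operators gives the stated bound. The main obstacle is this final concentration: both lower bounds in the preceding display are required, since configurations with many one-dimensional blocks of a single signature can saturate either bound alone without forcing anti-commutation, and lifting one-dimensional nuclear-norm near-tightness into operator-level Hilbert--Schmidt concentration is the key non-routine step; the exponent $1/32=1/2^5$ reflects roughly five successive square-root losses along the chain.
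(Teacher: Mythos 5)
Your proposal follows the paper's strategy closely through the first stage: translating $O$ to zero, exploiting near-tightness of the nuclear-norm triangle inequality along $\sigma = X_i + (\sigma - X_i)$ to extract Hermitianity and approximate positivity, and defining $P_{i,+}$ as the spectral projection onto the nonnegative part of $X_i - Y_i$. This matches the content of Proposition~\ref{prop:proj}. Your principal-angle endgame is also a valid and pleasantly geometric reformulation of the anticommutation extraction: the identity $(P_i-P_j)^2 + (P_i+P_j-\Id)^2 = \Id$ plays exactly the same role as the paper's averaging of $\Tr((P_i-P_j)^2\sigma)$ over the four pairs in Proposition~\ref{prop:ac}, and the quantitative rigidity of $|\sin\theta|+|\cos\theta|\le\sqrt{2}$ is the scalar shadow of the paper's Claim~\ref{claim:cs-cor}.

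However, there is a genuine gap in the middle, and it is precisely where the paper needed a nontrivial idea. Your own rigidity step correctly produces the weighted form $X_i\approx\sigma^{1/2}P_{i,+}\sigma^{1/2}$ (this is~\eqref{eq:obs-bound}). In the very next sentence you assert $\|X_i-P_{i,+}/d\|_{\schs}=O(\delta^{1/4})$ and $\tr(P_{i,+})\approx d/2$, which silently replaces $\sigma$ by $\Id/d$. Nothing in the hypotheses forces $\sigma$ to be close to $\Id/d$: the constraints are perfectly consistent with $\sigma$ having wildly unequal eigenvalues or being supported on a proper subspace of $\C^d$, in which case the metric conditions say nothing about $P_{i,\pm}$ restricted to the small-eigenvalue part of $\sigma$, and your subsequent bounds $\frac1d\|P_{i,+}\pm P_{j,+}-\cdots\|_{\schs}\gtrsim\sqrt{2}/2$ simply fail. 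What your argument, correctly carried out, delivers is a $\sigma$-\emph{weighted} anticommutation bound $\|\{A_i,A_j\}\sigma^{1/2}\|_F=O(\delta^{1/32})$ together with the approximate commutation $\|[A_i,\sigma^{1/2}]\|_F=O(\delta^{1/8})$ from~\eqref{eq:proj-com-bound} — i.e., exactly the output of Proposition~\ref{prop:ac}. Converting these weighted bounds into the unweighted $\frac1d\|\{A_i,A_j\}\|_F^2$ of~\eqref{eq:ac-intro} requires the separate argument of Lemma~\ref{lem:connes}: one writes $\sigma$ as an integral of spectral cutoffs $\chi_{\geq\sqrt a}(\sigma^{1/2})$, uses Connes' inequality (Claim~\ref{L:connestrick}) to control the commutators of the $A_i$ with these cutoffs, finds by an averaging argument a single cutoff $R$ that simultaneously keeps both the anticommutation and commutation errors small relative to $\Tr(R)$, and finally replaces $A_i$ by $R\sign(RA_iR)R$. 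This step is entirely absent from your proposal, and it is not a routine detail — without it the claimed conclusion on $\C^d$ with the $1/d$ normalization does not follow. The explanation of $\delta^{1/32}$ as "five successive square roots" is also not quite the source of the exponent: the exponent already appears at the $\sigma$-weighted stage, and the Connes step is what preserves it (at the cost of passing to a possibly smaller invariant subspace).
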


Note that the theorem does not assume that the $X_i$ and $Y_i$ are positive semidefinite, nor even that they are Hermitian; our proof shows that the metric constraints are sufficient to impose these conditions, up to a small approximation error. Similarly, while we think of $O$ as the zero matrix and of $\sigma$ as the scaled identity matrix, these conditions are not imposed a priori and have to be derived (which is very easy in the case of $O$ but less so in the case of $\sigma$). 
The proof of the theorem explicitly shows how to construct the projections $P_{i,+}$, $P_{i,-}$ from $X_i,Y_i,O$, and $\sigma$. 

Theorem~\ref{thm:realmain} follows from Theorem~\ref{thm:main} by applying known lower bounds on the dimension of (approximate) representations of the Clifford algebra that is generated by $n$ Hermitian anti-commuting operators;\footnote{Note that the norm in~\eqref{eq:ac-intro} is the Schatten-$2$ norm.} we give an essentially self-contained treatment in Section~\ref{sec:dimlowerbound}.  

The proof of Theorem~\ref{thm:main} is inspired by the theory of self-testing in quantum information theory. We interpret conditions such as~\eqref{eq:intro-ac} as requirements on the trace distance (which, up to a factor $2$ scaling, is the name used for the nuclear norm in quantum information) between post-measurement states that result from the measurement of one half of a bipartite quantum entangled state. This allows us to draw an analogy between metric conditions such as those in Theorem~\ref{thm:main} and constraints expressed by nonlocal games such as the CHSH game. Although this interpretation can serve as useful intuition for the proof, we give a self-contained proof that makes no reference to quantum information. We note that the relevance of dimension reduction for Schatten-$1$ spaces for quantum information has been recognized before; e.g., Harrow et al.~\cite{HarrowMS15} show limitations on dimension reduction maps that are restricted to be quantum channels (a result mostly superseded by~\cite{NaorPS18}).

\paragraph{Open questions.} 
We are currently not aware of \emph{any} upper bound on the dimension $d$ required to embed any $n$ points in $\sch_1$ into 
$\sch_1^d$ with, say, constant distortion. Proving such a bound would be interesting. 

Regarding possible improvements to our main theorem, our result requires the distortion of the embedding to be sufficiently small; specifically, $\delta$ needs to be at most inverse polynomial in $n$. It is open whether our result can be extended to larger distortions. 

 The connection with quantum information and nonlocal games suggests that  additional strong lower bounds may be achievable. For example, is it possible to adapt the results from~\cite{ji2018three,slofstra2018group} to construct a constant number of points in $\sch_1$ such that any embedding with distortion $(1+\delta)$ in $\sch_1^d$ requires $d \geq 2^{1/\delta^c}$ for some constant $c>0$? 

Looking at other Schatten spaces, we are only aware of trivial observations. Any set of $n$ points in $\sch_2$ trivially embeds into $\sch_2^{\lceil \sqrt{n-1}\rceil}$ by first embedding the points isometrically into $\ell_2^{n-1}$, as discussed earlier. For $\sch_\infty$, 
it is well known that any $n$ point metric isometrically embeds in $\ell_\infty^{n-1}$ and hence also in $\sch_\infty^{n-1}$; it is possible that this could be improved. We are not aware of bounds for other $\sch_p$, $p\notin\{1,2,\infty\}$.

\paragraph{Acknowledgements:}
We are grateful to IPAM and the organizers of the workshop ``Approximation Properties in Operator Algebras and Ergodic Theory'' where this work started. We also thank Assaf Naor for useful comments and encouragement. 

\section{Preliminaries}


For a matrix $A \in \C^{d\times d}$ we write $\|A\|_{\schs}$ for the Schatten-$1$ norm (the sum of the singular values). For the Schatten $2$-norm (also known as the Frobenius norm) we use $\|A\|_F$ instead of $\|A\|_{\scht}$, and introduce the dimension-normalized norm $\|A\|_f = d^{-1/2}\|A\|_F$. We write $\|A\|_{\schf}$ for the operator norm (the largest singular value). 
We  often consider terms of the form $\|T \sigma^{1/2} \|_F$ for a Hermitian matrix $T$ and a positive semidefinite matrix $\sigma$; notice that the square of this norm equals $\Tr(T^2 \sigma)$.
For $A,B$ square matrices we write $[A,B]=AB-BA$ and $\{A,B\}=AB+BA$ for the commutator and anti-commutator respectively. We write $U(d)$ for the set of unitary matrices in $\C^{d\times d}$. We use the term ``observable'' to refer to any Hermitian operator that squares to identity.


We will often use that for any $A$ and $B$,
\[ \|AB\|_{\schs} \,\leq\, \|A\|_{\schf} \|B\|_{\schs}\;,\]
and similarly with Schatten-$1$ replaced by the Frobenius norm (see, e.g.,~\cite[(IV.40)]{bhatia}). 

\begin{lemma}[Cauchy-Schwarz]\label{lem:cauchyschwarz}
For all matrices $A,B$,
\[
\|A B\|_{\schs} \,\le\, \|A\|_F \|B\|_F \; .
\]
\end{lemma}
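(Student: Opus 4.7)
The plan is to reduce the nuclear-norm inequality to the standard Cauchy--Schwarz inequality for the Hilbert--Schmidt inner product by way of the polar decomposition. First I would write the polar decomposition $AB = U|AB|$, where $|AB| = (B^*A^*AB)^{1/2}$ and $U$ is a partial isometry whose initial space equals the range of $|AB|$. Since $U^*U$ is then the orthogonal projection onto the range of $|AB|$, it acts as the identity on $|AB|$ itself, which yields the key identity
$$\|AB\|_{\schs} \;=\; \Tr\bigl(|AB|\bigr) \;=\; \Tr\bigl(U^*AB\bigr).$$
This converts the sum of singular values into a single trace that can be manipulated algebraically.

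Next I would interpret this trace as a Hilbert--Schmidt inner product. Using the definition $\langle X,Y\rangle_{HS}=\Tr(X^*Y)$, I would rewrite
$$\Tr(U^*AB)\;=\;\Tr\bigl((A^*U)^*B\bigr)\;=\;\langle A^*U,\,B\rangle_{HS},$$
whose norm is exactly the Frobenius norm. The ordinary Cauchy--Schwarz inequality for this Hilbert space then gives
$$\|AB\|_{\schs}\;\leq\;\bigl\|A^*U\bigr\|_F\,\|B\|_F.$$

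Finally I would eliminate the factor $U$ using the standard bound $\|XY\|_F \leq \|X\|_F\,\|Y\|_{\schf}$, which follows from $\|XY\|_F^2 = \Tr(Y^*X^*XY) \leq \|X^*X\|_{\schf}\,\Tr(Y^*Y)$. Because $U$ is a partial isometry we have $\|U\|_{\schf}\leq 1$, so $\|A^*U\|_F \leq \|A^*\|_F = \|A\|_F$, and the claimed inequality follows. There is no real obstacle here; the only small subtlety is keeping track of the fact that $U$ is merely a partial isometry rather than a unitary, which is handled once we observe $U^*U\cdot|AB|=|AB|$ in the first step and $\|U\|_{\schf}\leq 1$ in the last.
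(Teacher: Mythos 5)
Your proof is correct and follows essentially the same route as the paper's: both reduce $\|AB\|_{\schs}$ to a trace against an isometry (the paper via the variational formula $\sup_U \Tr(UAB)$ over unitaries, you via the polar decomposition) and then apply the Hilbert--Schmidt Cauchy--Schwarz inequality. One cosmetic remark: your displayed justification $\Tr(Y^*X^*XY)\le\|X^*X\|_{\schf}\Tr(Y^*Y)$ actually proves $\|XY\|_F\le\|X\|_{\schf}\|Y\|_F$ rather than the stated $\|XY\|_F\le\|X\|_F\|Y\|_{\schf}$; the version you apply to $A^*U$ is the latter, which follows from the symmetric computation $\|XY\|_F^2=\Tr(XYY^*X^*)\le\|YY^*\|_{\schf}\Tr(XX^*)$.
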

\begin{proof}
By definition,
\[
\|A B\|_{\schs} 
\,=\, 
\sup_U \Tr(UAB)
\,\le\, 
\|U A\|_F \|B\|_F 
\,=\, 
\|A\|_F \|B\|_F \; ,
\]
where the supremum is over all unitary matrices, and the inequality follows from the Cauchy-Schwarz inequality.
\end{proof}

\section{Certifying projections}
\label{sec:projections}

In this section we prove Proposition~\ref{prop:proj}, 
showing that metric constraints on a triple of operators $(X,Y,\sigma)$, where $\sigma$ is assumed to be positive semidefinite of trace $1$, can be used to enforce that the pair $(X,Y)$ is close to a ``resolution of the identity'', in the sense that there exists a pair $(P,Q)$ of orthogonal projections such that $P+Q=\Id$ and $X\approx \sigma^{1/2}P\sigma^{1/2}$, $Y\approx\sigma^{1/2}Q\sigma^{1/2}$.
The proposition also shows that $P,Q$ approximately commute with $\sigma$. 

\begin{proposition}\label{prop:proj}
Let $\sigma$ be positive semidefinite with trace $1$. Suppose that $X$, $Y$ satisfy the following constraints, for some $0\leq \delta \leq 1$:
\begin{align}
\|X\|_{\schs} + \|\sigma-X\|_{\schs} \leq 1+\delta\;,\label{eq:norm-cond-1}\\
\quad \|Y\|_{\schs} + \|\sigma-Y\|_{\schs} \leq 1+\delta\;,\label{eq:norm-cond-2}\\
\|X-Y\|_{\schs}\geq 1-\delta\;.\label{eq:norm-cond-3}
\end{align}
Then there exist orthogonal projections $P,Q$ such that $P+Q = \Id$ and 
\begin{align}
\max\Big\{ \big\|X - \sigma^{1/2} P \sigma^{1/2} \big\|_{\schs},\, \big\|Y - \sigma^{1/2} Q \sigma^{1/2} \big\|_{\schs}\Big\}\,=\, O\big(\delta^{1/8}\big)\;,\label{eq:obs-bound}\\
\max\Big\{\big\|[P,\sigma^{1/2}] \big\|_F\;,\big\|[Q,\sigma^{1/2}] \big\|_F \Big\}\,=\,O\big(\delta^{1/8}\big)\;.\label{eq:proj-com-bound}
\end{align}
\end{proposition}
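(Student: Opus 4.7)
The plan is to convert each of the three hypotheses into an algebraic relation, and then round. All three are near-saturated triangle inequalities: (\ref{eq:norm-cond-1}) and (\ref{eq:norm-cond-2}) express near-equality in $\|X\|_{\schs}+\|\sigma-X\|_{\schs}\ge\|\sigma\|_{\schs}=1$ (and analogously for $Y$), while (\ref{eq:norm-cond-3}) combined with the standard bounds $\|X-Y\|_{\schs}\le\min(\|X\|_{\schs}+\|Y\|_{\schs},\,\|\sigma-X\|_{\schs}+\|\sigma-Y\|_{\schs})$ forces both of these latter triangle inequalities to be near-tight as well, since the two gaps sum to at most $4\delta$. Near-tightness in a nuclear-norm triangle inequality produces structural constraints on the summands.

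The first stage extracts approximate Hermiticity and positivity. From (\ref{eq:norm-cond-1}), replacing $X$ by its Hermitian part $H=(X+X^\dagger)/2$ can only decrease both summands, so the elementary identity $\|H\|_{\schs}=\Tr H+2\Tr H_-$ applied to $H$ and to $\sigma-H$ gives $\Tr H_- + \Tr(\sigma-H)_- \le \delta/2$. A complementary argument using the dual formula $\|X\|_{\schs}=\sup_U |\Tr UX|$ (the $\sigma$-optimal $U$ is the projection onto $\operatorname{supp}\sigma$, and that same $U$ must nearly attain the sup for both $X$ and $\sigma-X$) bounds $\|X-H\|_{\schs}$ by $O(\delta^{1/2})$. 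Putting these together yields a PSD $\tilde X$ with $0\preceq\tilde X\preceq\sigma$ and $\|X-\tilde X\|_{\schs}=O(\delta^{1/2})$; similarly one obtains $\tilde Y$.

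The second stage extracts orthogonal-support relations. For PSD operators $A,B$, near-tightness of $\|A-B\|_{\schs}\le\|A\|_{\schs}+\|B\|_{\schs}$ forces $AB\approx 0$. Applied to $(\tilde X,\tilde Y)$ and to $(\sigma-\tilde X,\sigma-\tilde Y)$, and combined with the consequence $\tilde X(\sigma-\tilde X)\approx 0$ of stage one, a short support-chasing argument yields $\tilde Y\approx\sigma-\tilde X$. The algebraic relation $\tilde X(\sigma-\tilde X)\approx 0$ rewrites as $\tilde X\sigma\approx\tilde X^2$; taking Hermitian adjoints gives $\sigma\tilde X\approx\tilde X^2$ as well, so $[\sigma,\tilde X]\approx 0$. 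In the (approximate) joint eigenbasis of the commuting pair $\sigma,\tilde X$ the scalar relation $\lambda(\mu-\lambda)\approx 0$ means $\tilde X$ is close to $\sigma P$, where $P$ is the spectral projection of $\sigma^{-1/2}\tilde X\sigma^{-1/2}$ (defined on $\operatorname{supp}\sigma$) onto eigenvalues $\ge 1/2$.

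Setting $Q=\Id-P$ gives $P+Q=\Id$, and $[P,\sigma]\approx 0$ implies $\tilde X\approx\sigma P=\sigma^{1/2}P\sigma^{1/2}$; combining with $\|X-\tilde X\|_{\schs}=O(\delta^{1/2})$ yields~(\ref{eq:obs-bound}), and the analogue for $Y$ uses $\tilde Y\approx\sigma-\tilde X\approx\sigma Q$. For the commutator bound~(\ref{eq:proj-com-bound}), the relation $[P,\sigma]\approx 0$ transfers to $[P,\sigma^{1/2}]$ via a Powers--St\o{}rmer type inequality of the form $\|[\sigma^{1/2},P]\|_F^2\le\|[\sigma,P]\|_{\schs}$. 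The main obstacle, and the source of the final exponent $\delta^{1/8}$, is the quantitative bookkeeping: each transition (positivity extraction, orthogonal supports, spectral-projection rounding, Powers--St\o{}rmer) converts an $O(\delta)$ nuclear-norm bound into an $O(\delta^{1/2})$ Frobenius or operator-norm bound, and three such square-root losses yield $\delta\to\delta^{1/8}$. A secondary technicality is handling singular $\sigma$, which I would address by restricting all constructions to $\operatorname{supp}\sigma$ and using $\tilde X\preceq\sigma$ to confine $\tilde X$ there.
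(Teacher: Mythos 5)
Your high-level plan (near-saturated triangle inequalities force algebraic structure, then round) is the right intuition, and your endgame (Powers--St{\o}rmer to pass from $\|\rho-\sigma\|_{\schs}$ to $\|\rho^{1/2}-\sigma^{1/2}\|_F$, three square-root losses yielding $\delta^{1/8}$) matches the paper. But the middle of the argument has a genuine gap, and the route you describe diverges substantially from the one the paper actually takes.

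The gap is your claim that $\tilde X(\sigma-\tilde X)\approx 0$ is ``a consequence of stage one.'' Stage one uses only hypothesis~\eqref{eq:norm-cond-1}, and all it gives is the sandwiching $0\preceq\tilde X\preceq\sigma$ (approximately). That sandwiching does \emph{not} imply $\tilde X(\sigma-\tilde X)\approx 0$: take $\tilde X=\sigma/2$. In fact, for any PSD $A,B$ with $A+B=\sigma$ one has $\|A\|_{\schs}+\|B\|_{\schs}=\Tr\sigma=1$ exactly, so~\eqref{eq:norm-cond-1} carries no tightness information once $\tilde X$ is confined to $[0,\sigma]$; the entire projector structure has to be extracted from~\eqref{eq:norm-cond-3}. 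Once this ingredient is removed, the ``short support-chasing argument'' producing $\tilde Y\approx\sigma-\tilde X$ is circular (you'd need $\tilde Y\approx\sigma-\tilde X$ together with $\tilde X\tilde Y\approx 0$ to get $\tilde X(\sigma-\tilde X)\approx 0$, not the other way around). The subsequent spectral-projection rounding of $\sigma^{-1/2}\tilde X\sigma^{-1/2}$ therefore rests on a fact you haven't established, and is also delicate for singular or nearly singular $\sigma$. A secondary concern: your sketch for bounding $\|X-H\|_{\schs}$ (``the $\sigma$-optimal $U$ must nearly attain the sup for both $X$ and $\sigma-X$'') doesn't obviously close; the paper's Lemma~\ref{lem:positive-herm} has to build a specific unitary $U=(\Id+\alpha W)/(1+\alpha^2)^{1/2}$ that couples the anti-Hermitian part of $X$ into the supremum, and the bound $\|X-X_h\|_{\schs}\le 3\sqrt\delta$ comes out of optimizing $\alpha$, not from a uniqueness argument for the optimizer.

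The paper's route is structurally different in two ways you should compare against. First, the projection $P$ is \emph{not} built from $\tilde X$ alone: it is the spectral projection onto the positive eigenspace of $X-Y$ (after Hermitianizing). This single choice makes Lemma~\ref{lem:nearlyineigenspace} a four-line inequality chain giving $\Tr(PX)\ge\|X\|_{\schs}-O(\delta)$ and $\Tr(QY)\ge\|Y\|_{\schs}-O(\delta)$, which then feeds into Lemma~\ref{lem:largetraceimpliescloseness} (a Cauchy--Schwarz argument) to give $\|PXP-X\|_{\schs}=O(\sqrt\delta)$. Second, the passage from $PXP\approx X$ to the statement you actually want, $X\approx P\sigma P$, comes from the symmetry $(X,Y)\mapsto(\sigma-Y,\sigma-X)$: since $(\sigma-Y)-(\sigma-X)=X-Y$, this swapped pair produces the \emph{same} $P,Q$, and one gets $Q(\sigma-X)Q\approx\sigma-X$ for free. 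Combining and using $PQ=0$ yields $P\sigma P-PXP\approx 0$. This symmetry trick is the key step that replaces your missing relation $\tilde X(\sigma-\tilde X)\approx 0$, and nothing in your proposal plays its role.
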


For intuition regarding  Proposition~\ref{prop:proj}, consider the case where 
$\delta=0$, and where $X,Y,\sigma$ are $1$-dimensional, i.e., scalar complex numbers, 
$X=x$, $Y=y$, and $\sigma = 1$. Then the first two conditions~\eqref{eq:norm-cond-1} and~\eqref{eq:norm-cond-2} imply that $x,y$ are real and $x,y\in[0,1]$. The third condition~\eqref{eq:norm-cond-3} then implies that $x,y\in\{0,1\}$ and $x+y=1$.
The proof of Proposition~\ref{prop:proj} follows the same outline, adapted to higher-dimensional operators. The main idea is to argue that the projections $P,Q$ on the positive and negative eigenspace of $X-Y$ respectively approximately block-diagonalize $X$, $Y$, and $\sigma$. 

The proof is broken down into a sequence of lemmas. The first lemma shows that $X$ is close to its Hermitian part. 

\begin{lemma}[Hermitianity]\label{lem:positive-herm}
Let $\sigma$ be positive semidefinite such that $\Tr(\sigma)=1$, and $X$ such that~\eqref{eq:norm-cond-1} holds, for some $0\leq \delta \leq 1$. Then $\|X-X_h\|_{\schs} \le 3\sqrt{\delta}$, where $X_h = \frac{1}{2}(X+X^*)$ is the Hermitian part of $X$. 
\end{lemma}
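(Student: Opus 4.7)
The plan is to exploit the fact that the hypothesis forces a near-equality in the Schatten-$1$ triangle inequality $\|\sigma\|_{\schs} \le \|X\|_{\schs} + \|\sigma - X\|_{\schs}$, noting that $\|\sigma\|_{\schs} = \Tr(\sigma) = 1$. I would use this ``triangle slack'' to argue that $X$ is close to positive semidefinite, and in particular close to Hermitian.

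The first step is to extract a scalar inequality by dualizing. Since $\sigma$ has real trace equal to $1$, the standard duality $\|A\|_{\schs} \ge |\Tr A| \ge \Re\Tr A$ applied with $A = \sigma - X$ gives $\|\sigma - X\|_{\schs} \ge 1 - \Re\Tr(X)$. Combined with~\eqref{eq:norm-cond-1}, this yields the key inequality $\|X\|_{\schs} - \Re\Tr(X) \le \delta$. I would then take the polar decomposition $X = U|X|$, extending $U$ to a unitary on $\C^d$. Since $\|X\|_{\schs} = \Tr(|X|)$ and $\Re\Tr(X) = \Tr(\tfrac12(U+U^*)|X|)$, the inequality rewrites as
\[
\Tr\big(|X|\,(\Id - \tfrac12(U+U^*))\big) \;\le\; \delta.
\]

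To conclude, I would exploit the operator identity $(\Id-U)^*(\Id-U) = 2(\Id - \tfrac12(U+U^*))$, which converts the trace inequality into a Frobenius-norm bound
\[
\big\||X|^{1/2}(\Id-U)\big\|_F^2 \;=\; 2\,\Tr\big(|X|(\Id - \tfrac12(U+U^*))\big) \;\le\; 2\delta,
\]
and symmetrically $\|(\Id-U)|X|^{1/2}\|_F^2 \le 2\delta$. Now I would write $X - X^* = U|X| - |X|U^* = |X|(\Id - U^*) - (\Id - U)|X|$ and apply the Cauchy-Schwarz Lemma~\ref{lem:cauchyschwarz} to each term via the splitting $|X| = |X|^{1/2}|X|^{1/2}$:
\[
\big\|(\Id-U)|X|\big\|_{\schs} \;\le\; \big\|(\Id-U)|X|^{1/2}\big\|_F\,\big\||X|^{1/2}\big\|_F \;\le\; \sqrt{2\delta}\,\sqrt{\|X\|_{\schs}},
\]
and symmetrically for the other term. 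The hypothesis gives $\|X\|_{\schs} \le 1+\delta \le 2$, so combining the two bounds yields $\|X - X^*\|_{\schs} \le 2\sqrt{2\delta(1+\delta)} \le 4\sqrt\delta$, and hence $\|X - X_h\|_{\schs} = \tfrac12\|X - X^*\|_{\schs} \le 2\sqrt\delta \le 3\sqrt\delta$. The only substantive step is the reduction to the trace inequality $\Tr(|X|(\Id - \tfrac12(U+U^*))) \le \delta$; everything after that is routine operator manipulation. The main conceptual obstacle, which this reduction resolves, is that the Schatten-$1$ norm lacks a parallelogram identity relating $X$ to its Hermitian and anti-Hermitian parts, so one cannot directly bound the anti-Hermitian part of $X$ from the Schatten-$1$ hypothesis; passing through a Frobenius-norm estimate in the polar-decomposition frame is what makes the argument go through.
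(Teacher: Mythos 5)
Your proof is correct, and it takes a genuinely different route from the paper's. The paper decomposes $X = X_h + X_a$ into Hermitian and anti-Hermitian parts, then runs a \emph{variational} argument: it takes an anti-Hermitian contraction $W$ dual to $X_a$, forms the perturbed contraction $U_\alpha = (\Id + \alpha W)/(1+\alpha^2)^{1/2}$, expands $\Tr(U_\alpha X) \le \|X\|_{\schs}$, and optimizes over $\alpha$ (choosing $\alpha = \sqrt{\delta}$) to extract $\|X_a\|_{\schs} \le 3\sqrt{\delta}$. You instead work in the \emph{polar frame} $X = U|X|$: the same starting inequality $\|X\|_{\schs} - \Re\Tr(X) \le \delta$ becomes $\Tr\big(|X|(\Id - \tfrac12(U+U^*))\big) \le \delta$, and the algebraic identity $(\Id-U)(\Id-U)^* = (\Id-U)^*(\Id-U) = 2\Id - U - U^*$ (valid because $U$ is unitary) immediately upgrades this to the Frobenius bounds $\||X|^{1/2}(\Id-U)\|_F^2, \|(\Id-U)|X|^{1/2}\|_F^2 \le 2\delta$. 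Writing $X - X^* = |X|(\Id-U^*) - (\Id-U)|X|$ and splitting $|X| = |X|^{1/2}|X|^{1/2}$ via Lemma~\ref{lem:cauchyschwarz} then closes the argument with no parameter to optimize. Your route is arguably cleaner — it replaces the ansatz-and-optimize step with a single exact identity, and it yields a marginally better constant ($2\sqrt{\delta}$ vs. $3\sqrt{\delta}$) — while the paper's approach is perhaps more directly guided by the target ($X_a$ appears as an explicit object from the start). One small point to be careful about, which you handle correctly: for rank-deficient $X$ the polar factor is only a partial isometry, so you must extend it to a genuine unitary before invoking $U^*U = UU^* = \Id$ in the identity.
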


\begin{proof}
By~\eqref{eq:norm-cond-1}, 
\begin{align}
\Re(\Tr(X)) = 1 - \Re(\Tr(\sigma - X)) \ge 1- \|\sigma - X\|_{\schs} \ge \|X\|_{\schs} - \delta \; .\label{eq:trace-lower-bound}
\end{align}
Let $X=X_h+X_a$ be the decomposition of $X$ into Hermitian and anti-Hermitian parts. Then $\Re(\Tr(X_a))=0$, so $\Tr(X_h) \geq \|X\|_{\schs} - \delta$. Let $W$ be a unitary such that $\Tr(WX_a)=\|X_a\|_{\schs}$. Note that replacing $W \mapsto (W-W^*)/2$ we may assume that $W$ is anti-Hermitian (of norm at most $1$), so $(iW)$ is Hermitian. 
Let $0\leq \alpha \leq 1$ be a parameter to be determined. 
Then all eigenvalues of $\Id + \alpha W$ are in the complex interval $[1-\alpha i, 1+\alpha i]$ and 
therefore 
$U = (\Id + \alpha W)/(1+\alpha^2)^{1/2}$ has norm at most $1$.
Then 
\begin{align*}
\|X\|_{\schs}\,\geq\,|\Tr(UX)| &\geq \Re\big(\Tr(UX_h)+\Tr(UX_a) \big)\\ 
& = \frac{1}{(1+\alpha^2)^{1/2}} \big( \Tr(X_h) + \alpha \|X_a\|_{\schs} \big)\\
&\geq \frac{1}{(1+\alpha^2)^{1/2}} \big(\|X\|_{\schs} - \delta + \alpha \|X_a\|_{\schs}\big)\;,
\end{align*}
which shows that $\|X_a\|_{\schs} \leq \alpha\|X\|_{\schs}+ \delta/\alpha$. Choosing $\alpha = \sqrt{\delta}$ and using $\|X\|_{\schs}\leq (1+\delta)$ gives $\|X_a\|_{\schs} \le 3\sqrt{\delta}$. 
\end{proof}


\begin{lemma}\label{lem:nearlyineigenspace}
Let $X$ and $Y$ be Hermitian matrices satisfying
\begin{align*}
& \|X\|_{\schs} + \|Y\|_{\schs} \le 1+\delta \;, \\
& \|X-Y\|_{\schs} \ge 1-\delta \;,\\
& \Tr(X^-) \le \delta, \text{~and~} \Tr(Y^-) \le \delta
\end{align*}
for some $0 \le \delta \le 1$ where $X^-$ denotes the negative part of $X$ in the decomposition $X=X^+-X^-$ and similarly for $Y$. 
Then, if $P$ denotes the projection on the positive eigenspace of $X-Y$ and $Q=\Id-P$, 
we have
\[
\Tr(PX) \ge \|X\|_{\schs} - 4 \delta \; , \quad\Tr(QY) \ge \|Y\|_{\schs} - 4 \delta \; .
\]
\end{lemma}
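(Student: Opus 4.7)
The plan is to expand the nuclear norm $\|X-Y\|_{\schs}$ via the spectral decomposition of $X-Y$, exploit that $X,Y$ are ``nearly positive semidefinite'' to estimate the off-diagonal cross-terms, and then combine with the assumed trace-norm bound on $X,Y$.

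First I would observe that since $P-Q$ is the sign of the Hermitian matrix $X-Y$, we have the key identity
\[
\|X-Y\|_{\schs} \,=\, \Tr\big((P-Q)(X-Y)\big) \,=\, \Tr(PX) + \Tr(QY) - \Tr(PY) - \Tr(QX)\;.
\]
Next I would use the hypotheses $\Tr(X^-),\Tr(Y^-)\le \delta$: for any orthogonal projection $\Pi$ and any Hermitian $H$ with decomposition $H=H^+ - H^-$,
\[
-\Tr(\Pi H) \,=\, \Tr(\Pi H^-) - \Tr(\Pi H^+) \,\leq\, \Tr(H^-)\;,
\]
so that $-\Tr(PY)\leq \delta$ and $-\Tr(QX)\leq \delta$. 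Substituting into the identity above and using the lower bound $\|X-Y\|_{\schs}\geq 1-\delta$ yields
\[
\Tr(PX) + \Tr(QY) \,\geq\, 1 - 3\delta\;.
\]

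Finally I would combine this with the ``upper'' inequalities. Since $\Tr(QY) = \Tr(QY^+) - \Tr(QY^-) \leq \Tr(Y^+) \leq \|Y\|_{\schs}$, and analogously $\Tr(PX)\leq \|X\|_{\schs}$, the constraint $\|X\|_{\schs} + \|Y\|_{\schs} \leq 1+\delta$ gives
\[
\Tr(PX) \,\geq\, 1 - 3\delta - \Tr(QY) \,\geq\, 1 - 3\delta - \|Y\|_{\schs} \,\geq\, \|X\|_{\schs} - 4\delta\;,
\]
and symmetrically for $\Tr(QY)$. This completes the proof. There is no real obstacle here; the only mildly delicate point is recognizing that the ``positive eigenspace of $X-Y$'' is the right choice of $P$ so that the expansion of $\|X-Y\|_{\schs}$ pairs $PX$ with $QY$ (which the near-positivity makes large) against $PY$ and $QX$ (which the near-positivity makes essentially nonnegative after a sign flip), allowing the triangle-style argument above to close.
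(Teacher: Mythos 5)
Your proof is correct and follows essentially the same route as the paper: expand $\|X-Y\|_{\schs}=\Tr\bigl((P-Q)(X-Y)\bigr)$, bound the cross-terms $-\Tr(PY),-\Tr(QX)$ by $\delta$ via the near-positivity hypotheses, bound $\Tr(QY)$ by $\|Y\|_{\schs}$, and close using $\|X\|_{\schs}+\|Y\|_{\schs}\le 1+\delta$. The only difference is cosmetic (you isolate the sum $\Tr(PX)+\Tr(QY)\ge 1-3\delta$ before substituting, whereas the paper chains the inequalities directly), so no further comment is needed.
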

\begin{proof}
We have
\begin{align*}
1-\delta \le \|X-Y\|_{\schs} &= \Tr(P(X-Y)) - \Tr(Q(X-Y)) \\
& \le \Tr(PX) + \Tr(QY) + 2\delta \\
& \le \Tr(PX) + \|Y\|_{\schs} + 2\delta \\
& \le \Tr(PX) + 1 + 3 \delta - \|X\|_{\schs} ,
\end{align*}
where in the second inequality we used that $\Tr(PY) \ge -\Tr(Y^-) \ge -\delta$ and similarly for $\Tr(QX)$. As a result, we get that
\[
\Tr(PX) \ge \|X\|_{\schs} - 4 \delta \; ,
\]
and similarly for $\Tr(QY)$. 
\end{proof}

\begin{lemma}\label{lem:largetraceimpliescloseness}
Let $X$ be a Hermitian matrix and $P$ a projector satisfying 
\begin{align}
& \|X\|_{\schs} \le 1, \nonumber \\
& \Tr(X^-) \le \delta, \label{eq:boundonxminusdelta}\\
& \Tr(PX) \ge \|X\|_{\schs} - \delta \label{eq:trpxupperbound}\; ,
\end{align}
for some $0 \le \delta \le 1$.
Then,
\[
\|PXP-X\|_{\schs} \le O(\sqrt{\delta})\; .
\]
\end{lemma}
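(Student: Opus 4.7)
The plan is to bound $\|PXP-X\|_{\schs}$ by decomposing $X$ into positive and negative parts $X=X^+-X^-$ and $\Id=P+Q$ with $Q=\Id-P$, then controlling the off-diagonal blocks via the Cauchy--Schwarz inequality of Lemma~\ref{lem:cauchyschwarz}.

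First I would translate the hypotheses into a mass-transfer statement for $X^+$. Since $\|X\|_{\schs} = \Tr(X^+)+\Tr(X^-)$, the bound $\Tr(X^-)\le\delta$ gives $\Tr(X^+)\ge \|X\|_{\schs}-\delta$. Using $\Tr(PX^-)\le \Tr(X^-)\le\delta$ and the assumption~\eqref{eq:trpxupperbound}, we obtain $\Tr(PX^+)\ge \|X\|_{\schs}-2\delta\ge \Tr(X^+)-2\delta$, and therefore the ``off-support mass''
\[
\Tr(QX^+) \,=\, \Tr(X^+)-\Tr(PX^+) \,\leq\, 2\delta\;.
\]

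Next I would expand
\[
X - PXP \,=\, PXQ + QXP + QXQ\;,
\]
so by the triangle inequality it suffices to bound each of the three blocks by $O(\sqrt{\delta})$. For the diagonal $QXQ$ block, write $\|QXQ\|_{\schs} \le \|QX^+Q\|_{\schs}+\|QX^-Q\|_{\schs}$; both terms are traces of positive operators, bounded by $\Tr(QX^+)\le 2\delta$ and $\Tr(QX^-)\le\delta$ respectively, giving $\|QXQ\|_{\schs}\le 3\delta$. For the cross term $PXQ$ (and symmetrically $QXP$), write it as $PX^+Q-PX^-Q$ and apply Lemma~\ref{lem:cauchyschwarz} via the square roots: for any positive semidefinite $S$,
\[
\|PSQ\|_{\schs} \,=\, \bigl\|\bigl(P S^{1/2}\bigr)\bigl(S^{1/2}Q\bigr)\bigr\|_{\schs} \,\leq\, \|PS^{1/2}\|_F \,\|S^{1/2}Q\|_F \,=\, \sqrt{\Tr(PS)\,\Tr(QS)}\;.
\]
Applied to $S=X^+$ this yields $\|PX^+Q\|_{\schs}\le \sqrt{\Tr(X^+)\cdot 2\delta}\le \sqrt{2\delta}$, using $\Tr(X^+)\le \|X\|_{\schs}\le 1$. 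Applied to $S=X^-$ it yields $\|PX^-Q\|_{\schs}\le \sqrt{\Tr(X^-)^2}\le \delta$. Combining gives $\|PXQ\|_{\schs}\le \sqrt{2\delta}+\delta$.

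Putting the three block estimates together produces $\|PXP-X\|_{\schs}\le 2(\sqrt{2\delta}+\delta)+3\delta=O(\sqrt{\delta})$, which is the desired conclusion. The calculation is essentially routine once one realizes that the only nontrivial ingredient is the Cauchy--Schwarz factorization through $(X^+)^{1/2}$; no step poses a real obstacle, since the hypotheses already pin down the trace of $X^+$ on the range of $Q$ up to $O(\delta)$ and the $\sqrt{\delta}$ rate arises solely from this square-root trick.
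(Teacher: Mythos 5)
Your proof is correct and uses essentially the same mechanism as the paper's: the $\sqrt{\delta}$ rate comes from the Cauchy--Schwarz factorization $\|PSQ\|_{\schs}\le\|PS^{1/2}\|_F\|S^{1/2}Q\|_F$ applied with $S=X^+$, driven by the observation that $\Tr(QX^+)=O(\delta)$. The paper's version is slightly more economical---it replaces $X$ by $X^+$ at the outset (since $\|X-X^+\|_{\schs}\le\delta$) and then needs only the single estimate $\|(\Id-P)X^+\|_{\schs}^2\le\delta$ together with $\|PX^+P-X^+\|_{\schs}\le 2\|(\Id-P)X^+\|_{\schs}$, whereas you track $X^+$ and $X^-$ through all three blocks $PXQ$, $QXP$, $QXQ$---but this is just bookkeeping, not a different idea. (Minor remark: since $\Tr(PX^-)\ge 0$, you in fact have $\Tr(PX^+)\ge\|X\|_{\schs}-\delta$ and hence $\Tr(QX^+)\le\delta$, a factor of two better than what you wrote; this does not affect the conclusion.)
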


\begin{proof}
The assumption~\eqref{eq:boundonxminusdelta} is equivalent to $\|X - X^+\|_{\schs} \le \delta$, which implies that
$\|PXP - PX^+P\|_{\schs} \le \delta$.
Therefore, by the triangle inequality, it suffices to prove that
\begin{align}\label{eq:goalinsubspace}
\|PX^+P-X^+\|_{\schs} \le O(\sqrt{\delta})  \; .
\end{align}
Using the Cauchy-Schwarz inequality,
\begin{align*}
\|(\Id-P)X^+\|_{\schs}^2&\leq  \|(\Id-P)(X^+)^{1/2}\|_F^2 \|(X^+)^{1/2}\|_F^2 \\
&= \Tr\big((\Id-P)X^+\big) \|X^+\|_{\schs}\\
&= \big(\Tr(X^+)- \Tr(PX) - \Tr(PX^-)\big)\|X^+\|_{\schs}\\
&\leq \delta \|X\|_{\schs} \le \delta \;,
\end{align*}
where the second line uses that $(\Id-P)$ is a projector and the fourth uses $\Tr(X^+)\leq\|X\|_{\schs}$ for the first term and~\eqref{eq:trpxupperbound} for the second. To conclude, use the triangle inequality to write
\[ \|PX^+P-X^+\|_{\schs} \,\leq\, \|(P-\Id)X^+P\|_{\schs} + \|X^+(\Id-P)\|_{\schs} \,\leq\, 2\|(\Id-P)X^+\|_{\schs}\;.\]
\end{proof}

\begin{lemma}\label{prop:nearlycommuteprojectors}
Let $\sigma$, $X$, and $Y$ satisfy the assumptions of Proposition~\ref{prop:proj}
for some $0\leq \delta \leq 1$.
Then there exist orthogonal projections $P,Q$ such that $P+Q = \Id$ and 
\begin{align}
\|X - P\sigma P\|_{\schs} \le O(\delta^{1/4}) \text{~and~} \|Y - Q\sigma Q\|_{\schs} \le O(\delta^{1/4}).
\label{eq:prop:nearlycommuteprojectors}
\end{align}
Moreover, there exists a positive semidefinite $\rho$ that commutes with $P$ and $Q$ and
that satisfies $\|\rho-\sigma\|_{\schs} \le O(\delta^{1/4})$.
\end{lemma}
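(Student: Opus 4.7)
My plan is to chain the three preceding lemmas in order and then construct $\rho$ by symmetrizing $\sigma$ across $P$ and $Q$. First I would apply Lemma~\ref{lem:positive-herm} to each of $X$ and $Y$ separately, yielding $\|X - X_h\|_{\schs}, \|Y - Y_h\|_{\schs} \le 3\sqrt{\delta}$ for the Hermitian parts $X_h, Y_h$; inspection of the proof of that lemma also provides the bounds $\Tr(X_h^-), \Tr(Y_h^-) \le \delta/2$. Adding \eqref{eq:norm-cond-1} and \eqref{eq:norm-cond-2} and using $\|\sigma - X\|_{\schs} + \|\sigma - Y\|_{\schs} \ge \|X - Y\|_{\schs} \ge 1 - \delta$ gives $\|X\|_{\schs} + \|Y\|_{\schs} \le 1 + 3\delta$, and hence $\|X_h\|_{\schs} + \|Y_h\|_{\schs} \le 1 + O(\delta)$ and $\|X_h - Y_h\|_{\schs} \ge 1 - O(\sqrt{\delta})$. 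These are precisely the hypotheses of Lemma~\ref{lem:nearlyineigenspace} at error level $O(\sqrt{\delta})$: letting $P$ be the projection onto the positive eigenspace of $X_h - Y_h$ and $Q = \Id - P$, I obtain $\Tr(PX_h) \ge \|X_h\|_{\schs} - O(\sqrt{\delta})$ and $\Tr(QY_h) \ge \|Y_h\|_{\schs} - O(\sqrt{\delta})$. Feeding these into Lemma~\ref{lem:largetraceimpliescloseness}, whose conclusion involves a square root, then yields $\|X_h - P X_h P\|_{\schs}, \|Y_h - Q Y_h Q\|_{\schs} \le O(\delta^{1/4})$.

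To pass from $PX_hP$ to $P\sigma P$, I would apply the Hermitian block-diagonal inequality $\|M\|_{\schs} \ge \|PMP\|_{\schs} + \|QMQ\|_{\schs}$ (which follows from $(M + UMU^*)/2 = PMP + QMQ$ for $U = P - Q$, unitary invariance of the Schatten-$1$ norm, and the fact that block-diagonal matrices have Schatten-$1$ norm equal to the sum of their block norms) to $M = \sigma - X_h$. Combined with the upper bound $\|\sigma - X_h\|_{\schs} \le 1 - \|X\|_{\schs} + O(\sqrt{\delta})$ and the lower bound $\|Q(\sigma - X_h)Q\|_{\schs} \ge \|Q\sigma Q\|_{\schs} - \|QX_hQ\|_{\schs} = \Tr(Q\sigma) - O(\delta^{1/4})$ (using $\sigma \ge 0$), this gives $\|P(\sigma - X_h)P\|_{\schs} \le \Tr(P\sigma) - \|X\|_{\schs} + O(\delta^{1/4})$. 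The symmetric analysis on the $Y$-side gives $\Tr(Q\sigma) \ge \|Y\|_{\schs} - O(\delta^{1/4})$, and together with $\|X\|_{\schs} + \|Y\|_{\schs} \ge 1 - \delta$ this forces $\Tr(P\sigma) \le \|X\|_{\schs} + O(\delta^{1/4})$ and hence $\|P(\sigma - X_h)P\|_{\schs} \le O(\delta^{1/4})$. A triangle-inequality chain $X \to X_h \to PX_hP \to P\sigma P$ then establishes \eqref{eq:prop:nearlycommuteprojectors}, and the symmetric chain handles $Y$.

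For the final clause I would take $\rho = P\sigma P + Q\sigma Q$, which is manifestly positive semidefinite and commutes with both $P$ and $Q$. Since $\rho - \sigma = -(P\sigma Q + Q\sigma P)$ is Hermitian with zero diagonal blocks, $\|\rho - \sigma\|_{\schs} = 2\|P\sigma Q\|_{\schs}$, which I would decompose as $P\sigma Q = P(\sigma - X_h)Q + PX_hQ$; the second term is bounded by $\|X_h - PX_hP\|_{\schs} \le O(\delta^{1/4})$, and for the first, since $\sigma - X_h$ has negative part of norm $O(\sqrt{\delta})$ and (from the previous paragraph) $PP$-block of norm $O(\delta^{1/4})$, the PSD Cauchy--Schwarz inequality $\|P(\sigma - X_h)^+ Q\|_{\schs} \le \sqrt{\Tr(P(\sigma - X_h)^+ P)\,\Tr(Q(\sigma - X_h)^+ Q)}$ combined with the trivial $O(1)$ bound on $\Tr(Q(\sigma - X_h)^+ Q)$ gives a polynomial-in-$\delta$ bound. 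The main obstacle of the proof is recovering the claimed $\delta^{1/4}$ exponent in this last step: the direct Cauchy--Schwarz loses a square root, and bridging this gap requires a more delicate argument balancing the estimates available from the $X$- and $Y$-sides of the symmetric problem.
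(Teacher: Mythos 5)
Your route to the first conclusion~\eqref{eq:prop:nearlycommuteprojectors} differs genuinely from the paper's and works. The paper exploits the symmetry of~\eqref{eq:norm-cond-1}--\eqref{eq:norm-cond-3} under $(X,Y)\mapsto(\sigma-Y,\sigma-X)$ (note $(\sigma-Y)-(\sigma-X)=X-Y$, so the projections $P,Q$ are unchanged), and simply reruns Lemmas~\ref{lem:nearlyineigenspace} and~\ref{lem:largetraceimpliescloseness} on the transformed pair to obtain $\|Q(\sigma-X)Q - (\sigma-X)\|_{\schs} = O(\delta^{1/4})$; conjugating by $P$ and using $PQ=0$ then gives $\|P\sigma P - PXP\|_{\schs} = O(\delta^{1/4})$ in one line. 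You instead combine the pinching inequality $\|M\|_{\schs}\ge\|PMP\|_{\schs}+\|QMQ\|_{\schs}$ with a balance between $\Tr(P\sigma)$ and $\|X\|_{\schs}$, which is a correct and somewhat more hands-on argument. Both routes are fine here; the paper's symmetry trick is cleaner but, more importantly, it gives strictly more information, which matters for what follows.

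For the final clause your proof has a genuine gap, which you honestly flag. Your choice $\rho = P\sigma P + Q\sigma Q$ forces you to bound the off-diagonal block $\|P\sigma Q\|_{\schs}$, and your pinching/balance argument only controls $\|P(\sigma - X_h)P\|_{\schs}$ --- it says nothing about $P(\sigma - X_h)Q$. The Cauchy--Schwarz step then loses a square root, yielding only $O(\delta^{1/8})$. The paper sidesteps this entirely by a different choice: $\tilde\rho = PXP + Q(\sigma-X)Q$, which is block-diagonal by construction, and for which $\tilde\rho - \sigma = (PXP - X) + (Q(\sigma-X)Q - (\sigma-X))$ is immediately bounded by the two already-established $O(\delta^{1/4})$ estimates; passing to the positive part of $\tilde\rho$ (which stays block-diagonal) handles the PSD requirement. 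If you prefer to keep $\rho=P\sigma P+Q\sigma Q$, note that the paper's symmetry argument does supply what you are missing: $\|Q(\sigma-X_h)Q - (\sigma-X_h)\|_{\schs}=O(\delta^{1/4})$ controls the off-diagonal blocks as well, since $P(\sigma-X_h)Q = P\bigl((\sigma-X_h) - Q(\sigma-X_h)Q\bigr)Q$, which gives $\|P\sigma Q\|_{\schs}=O(\delta^{1/4})$ and closes the gap without Cauchy--Schwarz. So the missing idea is precisely the symmetry-based bound on the full defect $Q(\sigma-X)Q - (\sigma-X)$, not just its $PP$ block; one minor additional slip is your claim $\Tr(X_h^-)\le\delta/2$ from inspecting Lemma~\ref{lem:positive-herm} --- the correct derivation (as in the paper) uses $\|X\|_{\schs}+\|\sigma-X\|_{\schs}\le 1+O(\sqrt\delta)$ together with $\Tr(\sigma)=1$ and yields $O(\sqrt\delta)$, which is still adequate for invoking Lemma~\ref{lem:nearlyineigenspace}.
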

\begin{proof}
Using Lemma~\ref{lem:positive-herm}, we can replace $X$ and $Y$ with their Hermitian parts,
and have Eqs.~\eqref{eq:norm-cond-1}-\eqref{eq:norm-cond-3} still hold with $O(\sqrt{\delta})$ 
in place of $\delta$.
By summing Eqs.~\eqref{eq:norm-cond-1} and~\eqref{eq:norm-cond-2}, and noting by the triangle inequality
that $\|\sigma-X\|_{\schs} + \|\sigma-Y\|_{\schs} \ge \|X-Y\|_{\schs} \ge 1-O(\sqrt \delta)$, we get that
$\|X\|_{\schs} + \|Y\|_{\schs} \le 1+O(\sqrt \delta)$. Moreover,
\begin{align*}
\Tr(X^-) &= \|X^+\|_{\schs} - \Tr(X) \\
&\le 1+O(\sqrt\delta) - \|\sigma-X\|_{\schs} - \Tr(X) \\
& \le 1+O(\sqrt\delta) - \Tr(\sigma-X) - \Tr(X) = O(\sqrt\delta) 
\end{align*}
and similarly for $Y$. 
We can therefore apply Lemma~\ref{lem:nearlyineigenspace} and obtain that if 
$P$ is the projection on the positive eigenspace of $X-Y$ and $Q=\Id-P$,
\[
\Tr(PX) \ge \|X\|_{\schs} - O(\sqrt{\delta})  \text{~and~}
\Tr(QY) \ge \|Y\|_{\schs} - O(\sqrt{\delta}) \; .
\]
Applying Lemma~\ref{lem:largetraceimpliescloseness} to $X$ (scaled by a factor at most $(1+\delta)$ so that
the condition $\|X\|_{\schs}\leq 1$ is satisfied) and $P$, we get that
\begin{align}\label{eq:XYinsubspace}
\|PXP - X\|_{\schs} = O(\delta^{1/4})  \text{~and~}
\|QYQ - Y\|_{\schs} = O(\delta^{1/4})  \; .
\end{align}
Notice that the set of constraints in Eqs.~\eqref{eq:norm-cond-1}-\eqref{eq:norm-cond-3} is invariant under
replacing the pair $(X,Y)$ with $(\sigma-Y,\sigma-X)$. Moreover, our assumption that
$X$ and $Y$ are Hermitian implies that $\sigma-X$ and $\sigma-Y$ are also Hermitian.
Therefore, the exact same argument as above applies also to $\sigma-X$ and $\sigma-Y$ 
and we conclude that
\begin{align}\label{eq:sigmaXYinsubspace}
\|P(\sigma-Y)P - (\sigma-Y)\|_{\schs} = O(\delta^{1/4})
  \text{~and~}
	\|Q(\sigma-X)Q - (\sigma-X)\|_{\schs} = O(\delta^{1/4})
  \; .
\end{align}
Notice that we used here the fact that $(\sigma-Y)-(\sigma-X) = X-Y$ and therefore
the projections $P$ and $Q$ obtained when we apply Lemma~\ref{lem:nearlyineigenspace}
to $X$ and $Y$ are identical to those obtained when we apply it to
$\sigma-Y$ and $\sigma-X$.

From~\eqref{eq:sigmaXYinsubspace}, and since $PQ=0$, we obtain that
\[
\|P\sigma P - PXP\|_{\schs} =
\|PQ(\sigma-X)QP - P(\sigma-X)P\|_{\schs} \le
\|Q(\sigma-X)Q - (\sigma-X)\|_{\schs} = O(\delta^{1/4}) \; .
\]
Together with~\eqref{eq:XYinsubspace} and the triangle inequality, this 
proves~\eqref{eq:prop:nearlycommuteprojectors}. 

To prove the last part
of the lemma, let $\tilde\rho = PXP + Q(\sigma-X)Q$
and notice that $\tilde{\rho}$ commutes with $P$ and $Q$. By Eqs.~\eqref{eq:XYinsubspace} 
and~\eqref{eq:sigmaXYinsubspace} and the triangle inequality,
$\|\tilde\rho - \sigma \|_{\schs} = O(\delta^{1/4})$. Finally, we define $\rho$ to be 
the positive part of $\tilde\rho$, which due to the block diagonal form of $\tilde\rho$ still commutes with $P$ and $Q$. We have
$\|\rho - \sigma \|_{\schs} = O(\delta^{1/4})$ since
\[
\|\rho - \tilde\rho \|_{\schs} = \frac{1}{2} ( \|\tilde \rho\|_{\schs} - \Tr(\tilde \rho) ) 
\le \frac{1}{2} (\|\sigma\|_{\schs} - \Tr(\sigma)) + O(\delta^{1/4})
= 
O(\delta^{1/4}) \; ,
\]
where the last equality uses that $\sigma$ is positive semidefinite.
\end{proof}

We conclude by giving the proof of Proposition~\ref{prop:proj}.

\begin{proof}[Proof of Proposition~\ref{prop:proj}]
Let $P$, $Q$, and $\rho$ be as guaranteed by Lemma~\ref{prop:nearlycommuteprojectors}.
Using the Powers-Stormer inequality $\|\sqrt{R}-\sqrt{S}\|_F \leq \|R-S\|_{\schs}^{1/2}$ for positive semidefinite $R$, $S$ (see, e.g.,~\cite[(X.7)]{bhatia}), it follows that
\begin{align}\label{eq:powersstormer}
\| \rho^{1/2}-\sigma^{1/2} \|_F \le \| \rho - \sigma \|_{\schs}^{1/2} = O(\delta^{1/8}) \; .
\end{align}
As a result, using the triangle inequality and Cauchy-Schwarz,
\begin{align*}
\| \sigma^{1/2} P \sigma^{1/2} - \rho^{1/2} P \rho^{1/2} \|_{\schs} \le
\| (\sigma^{1/2} - \rho^{1/2}) P \sigma^{1/2} \|_{\schs} +
\| \rho^{1/2} P (\sigma^{1/2} - \rho^{1/2}) \|_{\schs} \le O(\delta^{1/8}) \;,
\end{align*}
where we used that $\|P \sigma^{1/2}\|_F \le \| \sigma^{1/2} \|_F = 1$
and $\|P \rho^{1/2}\|_F \le \| \rho^{1/2} \|_F = 1 + O(\delta^{1/4})$.
But $\rho$ commutes with $P$ and therefore
$\rho^{1/2} P \rho^{1/2} = P \rho P$, and we complete the proof of~\eqref{eq:obs-bound} by noting
that 
\[
\|P \rho P - P \sigma P\|_{\schs} \le \|\rho-\sigma\|_{\schs} = O(\delta^{1/4}) \; .
\]

To prove~\eqref{eq:proj-com-bound}, notice that by~\eqref{eq:powersstormer} and the triangle inequality,
\[
\| P \sigma^{1/2} - \sigma^{1/2} P \|_F \le 
\| P \rho^{1/2} - \rho^{1/2} P \|_F + O(\delta^{1/8}) \;,
\]
but the latter norm is zero since $P$ commutes with $\rho$.
\end{proof}


\section{Certifying anticommutation}
\label{sec:ac}

In this section we prove Proposition~\ref{prop:ac}. The proposition shows that assuming two pairs of operators $(X_1,Y_1)$ and $(X_2,Y_2)$ satisfying the assumptions of Proposition~\ref{prop:proj} satisfy additional metric constraints, the corresponding projections $(P_1,Q_1)$ and $(P_2,Q_2)$ are such that the operators $P_1-Q_1$ and $P_2-Q_2$ have small anti-commutator, in the appropriate norm. For intuition, consider the case of operators in two dimensions, and $\sigma=\Id$. 
Then, Proposition~\ref{prop:proj} shows that we can think of $(X_1,Y_1)$ and $(X_2,Y_2)$ as two pairs of orthogonal projections.
Assuming that these projections are of rank $1$ (as would follow from the constraint~\eqref{eq:antic-cond} below),
we can think of them as two pairs of orthonormal bases $(u_1,v_1)$ and $(u_2,v_2)$ of $\C^2$. Suppose we were to impose that these vectors satisfy the four Euclidean conditions 
\begin{align}
  \|u_1-u_2\|_2^2 \,=\, \|u_1-v_2\|_2^2 \,=\, \|v_1-u_2\|_2^2 \,=\, \|v_1+v_2\|_2^2 \,=\, 2-\sqrt{2}\;.
	\label{eq:twodimensionalexample}
\end{align}
By expanding the squares, it is not hard to see that these conditions imply that the bases must form an angle of $\frac{\pi}{4}$ as shown in Figure~\ref{fig:chsh}.\footnote{These conditions underlie the rigid properties of the famous CHSH inequality from quantum information~\cite{tsirelson,summers1987maximal}.}
In particular, the reflection operators $A_i = u_i u_i^* - v_i v_i^*$, $i\in\{1,2\}$, anti-commute. Proposition~\ref{prop:ac} adapts this observation to the trace norm between matrices in any dimension, and small error.
We start with two technical claims. 

\begin{figure}[ht]
\centering
\includegraphics[width=30mm]{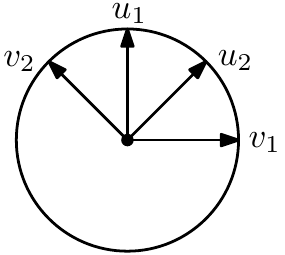}
\caption{Vectors satisfying the metric constraints}
\label{fig:chsh}
\end{figure}

\begin{claim}\label{claim:c-s-equality}
Let $A,B\neq 0$ be such that $\Re(\Tr(A^* B))\geq (1-\delta)\|A\|_F\|B\|_F$ for some $0\leq\delta \leq 1$. Let $\alpha = \|A\|_F/\|B\|_F$. Then $\|A-\alpha B\|_F \leq \sqrt{2\delta}\|A\|_F$.  
\end{claim}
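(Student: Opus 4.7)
My plan is to prove this by directly expanding the squared Frobenius norm $\|A-\alpha B\|_F^2$ and using the given inner product lower bound together with the definition of $\alpha$.

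First I would write
\[
\|A-\alpha B\|_F^2 \,=\, \Tr\big((A-\alpha B)^*(A-\alpha B)\big) \,=\, \|A\|_F^2 - 2\alpha\,\Re\big(\Tr(A^* B)\big) + \alpha^2 \|B\|_F^2\;.
\]
Then I would substitute $\alpha = \|A\|_F/\|B\|_F$ so that $\alpha^2\|B\|_F^2 = \|A\|_F^2$ and $\alpha\|B\|_F = \|A\|_F$, turning the last term into $\|A\|_F^2$ and the middle term into $2(\|A\|_F/\|B\|_F)\Re(\Tr(A^*B))$. Using the hypothesis $\Re(\Tr(A^*B)) \geq (1-\delta)\|A\|_F\|B\|_F$, the middle term is at least $2(1-\delta)\|A\|_F^2$. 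Collecting terms gives
\[
\|A-\alpha B\|_F^2 \,\leq\, \|A\|_F^2 - 2(1-\delta)\|A\|_F^2 + \|A\|_F^2 \,=\, 2\delta\|A\|_F^2\;,
\]
and taking square roots yields the claim.

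There is no real obstacle here; the statement is the standard stability version of Cauchy--Schwarz in the Hilbert space $(\C^{d\times d},\langle\cdot,\cdot\rangle_{HS})$, and the proof is a one-line expansion. The only subtle point is to note that $\alpha$ is chosen precisely so that the cross term and the quadratic term balance (one could equivalently view this as minimizing $\|A-\lambda B\|_F$ over real $\lambda > 0$, and the optimum is essentially $\alpha$ up to the phase hidden in $\Re(\Tr(A^*B))$).
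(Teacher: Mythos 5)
Your proof is correct and is essentially identical to the paper's: both expand $\|A-\alpha B\|_F^2 = \|A\|_F^2 + \alpha^2\|B\|_F^2 - 2\alpha\Re(\Tr(A^*B))$, apply the hypothesis to the cross term, and use the definition of $\alpha$ to simplify to $2\delta\|A\|_F^2$. No differences worth noting.
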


\begin{proof}
Expand 
\begin{align*}
\|A-\alpha B\|_F^2\, 
&=\, \|A\|_F^2 + \alpha^2 \|B\|_F^2 - 2\alpha\Re(\Tr (A^* B)) \\
& \le \,  \|A\|_F^2 + \alpha^2 \|B\|_F^2 - 2\alpha (1-\delta) \|A\|_F \|B\|_F \\
& = \, 2 \delta \|A\|_F^2 
\;.
\end{align*}
\end{proof}

\begin{claim}\label{claim:cs-cor}
Let $R$ be Hermitian and $\sigma$ positive semidefinite such that $\Tr(\sigma)=1$. Suppose further that $\|\sigma^{1/2}R\sigma^{1/2}\|_{\schs} \geq (1-\delta) \sqrt{\mu}$, where $\mu = \Tr(R^2\sigma)$. Then 
\[ \big\|(R^2-\mu\Id)\sigma^{1/2}\big\|_F^2\,=\, O\big(\sqrt{\delta}\|R\|_{\schf}^2\big) \,\mu\;.\]
\end{claim}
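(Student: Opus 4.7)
The plan is to exploit the fact that the hypothesis is a near-equality in Cauchy--Schwarz to extract a Frobenius approximation $R\sigma^{1/2}\approx \sqrt{\mu}\,\sigma^{1/2}V$ for some unitary $V$, and then to propagate it one more step to control $R^2\sigma^{1/2}$. The conclusion will follow from the elementary identity $\|(R^2-\mu\Id)\sigma^{1/2}\|_F^2 = \|R^2\sigma^{1/2}\|_F^2-\mu^2$ (which uses $\Tr(R^2\sigma)=\mu$ and $\Tr(\sigma)=1$).

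First I would revisit the proof of Lemma~\ref{lem:cauchyschwarz}: letting $U$ be a unitary achieving $\|\sigma^{1/2}R\sigma^{1/2}\|_{\schs}=\Re\Tr(U\sigma^{1/2}R\sigma^{1/2})$, the trace rewrites as $\Re\Tr((\sigma^{1/2}U^*)^*(R\sigma^{1/2}))$, and cyclicity of the trace gives $\|\sigma^{1/2}U^*\|_F=1$ and $\|R\sigma^{1/2}\|_F=\sqrt{\mu}$. The hypothesis thus fits the setup of Claim~\ref{claim:c-s-equality}, which with $V=U^*$ and $\alpha=\sqrt{\mu}$ yields
\[ \|R\sigma^{1/2}-\sqrt{\mu}\,\sigma^{1/2}V\|_F\,\leq\,\sqrt{2\delta\mu}\,. \]

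Next I would propagate this approximation by multiplying on the left by $R$ (costing at most a factor $\|R\|_{\schf}$ in the error) and separately on the right by the unitary $V$ (an isometry in the Frobenius norm). Chaining the two resulting inequalities via the triangle inequality, and noting that $\sqrt{\mu}\leq\|R\|_{\schf}$ since $\mu=\Tr(R^2\sigma)\leq\|R\|_{\schf}^2\Tr(\sigma)$, gives
\[ \|R^2\sigma^{1/2}-\mu\,\sigma^{1/2}V^2\|_F\,\leq\,2\|R\|_{\schf}\sqrt{2\delta\mu}\,. \]

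The key observation now is that although $R^2\sigma^{1/2}$ is close only to $\mu\sigma^{1/2}V^2$ rather than to $\mu\sigma^{1/2}$, the unitarity of $V$ together with $\Tr(\sigma)=1$ implies $\|\mu\,\sigma^{1/2}V^2\|_F=\mu$, so the triangle inequality supplies $\|R^2\sigma^{1/2}\|_F\leq \mu+2\|R\|_{\schf}\sqrt{2\delta\mu}$. Squaring, subtracting $\mu^2$, and invoking $\sqrt{\mu}\leq\|R\|_{\schf}$ one last time produces the desired $O(\sqrt{\delta}\|R\|_{\schf}^2)\mu$ upper bound. The only step I expect to call for some care is the first one: one must check that the real-part version of Cauchy--Schwarz needed by Claim~\ref{claim:c-s-equality} can be aligned with the nuclear-norm lower bound through an optimal choice of $U$ and absorption of any unwanted phase. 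Once this alignment is in place, the rest reduces to repeated use of the triangle inequality and submultiplicativity of the Frobenius norm under operator-norm multiplication.
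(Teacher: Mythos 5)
Your proposal is correct. You open the same way the paper does: saturating Cauchy--Schwarz via Claim~\ref{claim:c-s-equality} to obtain $\|R\sigma^{1/2}-\sqrt{\mu}\,\sigma^{1/2}U\|_F\leq\sqrt{2\delta\mu}$ for a suitable unitary $U$ (the phase-alignment you flag as delicate is handled exactly as you suspect: choose $U$ with $U\sigma^{1/2}R\sigma^{1/2}=|\sigma^{1/2}R\sigma^{1/2}|$, so that the trace $\Tr(\sigma^{1/2}R\sigma^{1/2}U)$ is already real and equals the nuclear norm; since $\sigma^{1/2}R\sigma^{1/2}$ is Hermitian the two-sided polar decomposition makes your $V$ and the paper's $U$ coincide). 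After that the routes diverge. The paper multiplies the approximation once on the left and once on the right and chains to get a \emph{nuclear}-norm bound $\|R\sigma R-\mu\sigma\|_{\schs}\leq 2\sqrt{2\delta}\,\mu$ via Lemma~\ref{lem:cauchyschwarz}, then finishes in one H\"older step, using the algebraic identity $\Tr\big((R^2-\mu\Id)^2\sigma\big)=\Tr\big(R^2(R\sigma R-\mu\sigma)\big)\leq \|R^2\|_{\schf}\,\|R\sigma R-\mu\sigma\|_{\schs}$. You instead stay entirely in the Frobenius norm: propagating to $\|R^2\sigma^{1/2}-\mu\,\sigma^{1/2}V^2\|_F\leq 2\|R\|_{\schf}\sqrt{2\delta\mu}$, using $\|\mu\,\sigma^{1/2}V^2\|_F=\mu$ to get $\|R^2\sigma^{1/2}\|_F\leq\mu+2\|R\|_{\schf}\sqrt{2\delta\mu}$, and then squaring and subtracting $\mu^2$. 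Your identity $\|(R^2-\mu\Id)\sigma^{1/2}\|_F^2=\|R^2\sigma^{1/2}\|_F^2-\mu^2$ and the paper's $\Tr\big((R^2-\mu\Id)^2\sigma\big)=\Tr(R^4\sigma)-\mu^2$ are of course the same statement (also note that this quantity is automatically nonnegative, by Cauchy--Schwarz applied to $\Tr(R^2\sigma^{1/2}\cdot\sigma^{1/2})$). What your route buys is that it never needs the nuclear-norm submultiplicativity of Lemma~\ref{lem:cauchyschwarz} nor the final H\"older step; it only uses that left-multiplication costs a factor $\|R\|_{\schf}$ and that right-multiplication by a unitary is a Frobenius isometry. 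What the paper's route buys is a shorter final step: after the $\|\cdot\|_{\schs}$ bound, a single application of H\"older closes the argument without the squaring and subtraction bookkeeping. Both give the $O(\sqrt{\delta}\,\|R\|_{\schf}^2)\,\mu$ bound, with different but comparable constants.
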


\begin{proof}
Let $U$ be a unitary such that $U\sigma^{1/2}R\sigma^{1/2} =|\sigma^{1/2}R\sigma^{1/2}|$. Let $A = R\sigma^{1/2}$ and $B = \sigma^{1/2}U$,
and notice that $\|A\|_F = \sqrt \mu$ and $\|B\|_F = 1$. Then 
\[ 
\Tr\big(A^* B\big)
\,=\, 
\Tr\big(\sigma^{1/2} R \sigma^{1/2}U \big) 
\,=\,
\Tr|\sigma^{1/2} R\sigma^{1/2}|\,\geq\, (1-\delta)\sqrt{\mu}\;,
\]
by assumption. Applying Claim~\ref{claim:c-s-equality}  it follows that 
\begin{equation}\label{eq:cs-cor-1}
\|R\sigma^{1/2} - \sqrt{\mu} \sigma^{1/2} U\|_F^2\leq 2\delta \mu\;.
\end{equation}
By the triangle inequality,
\begin{align*}
\|R\sigma R - \mu \sigma\|_{\schs} &\leq  \| (R\sigma^{1/2}- \sqrt{\mu} \sigma^{1/2} U)\sigma^{1/2}R\|_{\schs} +  \|\sqrt{\mu} \sigma^{1/2} U (\sqrt{\mu} U^* \sigma^{1/2}-  \sigma^{1/2} R)\|_{\schs} \\
&\leq 2\sqrt{2\delta}\mu\;,
\end{align*}
where the second line uses the Cauchy-Schwarz inequality and~\eqref{eq:cs-cor-1}. 
Thus
\begin{align*}
\Tr\big( (R^2-\mu\Id)^2\sigma\big) &= \Tr\big(R^4\sigma\big) - 2\mu\Tr\big(R^2\sigma\big) + \mu^2 \\
&=\Tr\big(R^2(R\sigma R - \mu \sigma)\big)  \\
&\leq  2\sqrt{2\delta}\|R\|_{\schf}^2  \mu\;.
\end{align*}
\end{proof}

\begin{proposition}\label{prop:ac}
Let $\sigma$ be positive semidefinite such that $\Tr(\sigma)=1$. Let $X_1,Y_1$ and $X_2,Y_2$ be operators satisfying the assumptions of Proposition~\ref{prop:proj} for some $0\leq \delta \le 1$, and $P_1,Q_1$ and $P_2,Q_2$ be as in the conclusion of the proposition.  Suppose further that\footnote{The reason that the ``$+$'' sign in the last term in~\eqref{eq:twodimensionalexample} is replaced by a ``$-$'' in~\eqref{eq:antic-cond} is that one should think of $X_i,Y_j$ as the projectors on $u_i,v_j$.}
\begin{align}
\min\big\{ \|X_1-X_2\|_{\schs} ,\, \|X_1-Y_2\|_{\schs} ,\, \|Y_1-X_2\|_{\schs} ,\, \|Y_1-Y_2\|_{\schs}\big\} \,\geq\, (1-\delta) \frac{\sqrt{2}}{2}\;.\label{eq:antic-cond}
\end{align}
For $i\in\{1,2\}$ let $A_i = P_i-Q_i$. Then $A_1$, $A_2$ are observables\footnote{Recall that an observable is a Hermitian operator that squares to identity.} such that 
\begin{align}
 \big\| \{A_1,A_2\} \sigma^{1/2} \big\|_F \,=\, O\big(\delta^{1/32}\big)\;.\label{eq:obs-ac-bound}
\end{align}
\end{proposition}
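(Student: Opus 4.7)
The plan is to transport the metric hypotheses through Proposition~\ref{prop:proj} into spectral statements about $A_1,A_2$, and then to invoke the Cauchy--Schwarz saturation result Claim~\ref{claim:cs-cor} to extract approximate anti-commutation. First observe that since $P_i$ and $Q_i=\Id-P_i$ are orthogonal projections summing to the identity, $A_i=P_i-Q_i=2P_i-\Id$ is Hermitian with $A_i^2=\Id$, so $A_i$ is automatically an observable. Writing $\sigma^{1/2}P_i\sigma^{1/2}=\tfrac12(\sigma+\sigma^{1/2}A_i\sigma^{1/2})$ and $\sigma^{1/2}Q_i\sigma^{1/2}=\tfrac12(\sigma-\sigma^{1/2}A_i\sigma^{1/2})$, Proposition~\ref{prop:proj} tells us that $X_i$ and $Y_i$ are approximated by these expressions up to $O(\delta^{1/8})$ in Schatten-$1$ norm. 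Applying the triangle inequality to~\eqref{eq:antic-cond} (using e.g.\ $\|X_1-X_2\|_{\schs}$ for the first estimate and $\|X_1-Y_2\|_{\schs}$ for the second) yields
\[ \big\|\sigma^{1/2}(A_1-A_2)\sigma^{1/2}\big\|_{\schs}\,\ge\,\sqrt{2}-O(\delta^{1/8})\,,\qquad \big\|\sigma^{1/2}(A_1+A_2)\sigma^{1/2}\big\|_{\schs}\,\ge\,\sqrt{2}-O(\delta^{1/8})\,. \]

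Next, for $R\in\{A_1-A_2,\,A_1+A_2\}$ set $\mu_R=\Tr(R^2\sigma)$. Lemma~\ref{lem:cauchyschwarz} gives $\|\sigma^{1/2}R\sigma^{1/2}\|_{\schs}\le\|R\sigma^{1/2}\|_F\|\sigma^{1/2}\|_F=\sqrt{\mu_R}$, so the lower bounds above force $\mu_R\ge 2-O(\delta^{1/8})$. Since $R^2=2\Id\mp\{A_1,A_2\}$, the two values of $\mu_R$ sum to $4$; together with the lower bound this also gives $\mu_R\le 2+O(\delta^{1/8})$. Hence $\|\sigma^{1/2}R\sigma^{1/2}\|_{\schs}\ge(1-O(\delta^{1/8}))\sqrt{\mu_R}$, and Claim~\ref{claim:cs-cor} (with $\|R\|_{\schf}\le 2$) yields
\[ \big\|(R^2-\mu_R\Id)\sigma^{1/2}\big\|_F^2\,=\,O(\delta^{1/16})\,. \]

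Specialize to $R=A_1-A_2$ and set $\tau=\Tr(\{A_1,A_2\}\sigma)=2-\mu_R$, so that $R^2-\mu_R\Id=\tau\Id-\{A_1,A_2\}$. The previous display becomes
\[ \big\|(\{A_1,A_2\}-\tau\Id)\sigma^{1/2}\big\|_F^2\,=\,O(\delta^{1/16})\,. \]
A direct expansion gives $\|\{A_1,A_2\}\sigma^{1/2}\|_F^2=\|(\{A_1,A_2\}-\tau\Id)\sigma^{1/2}\|_F^2+\tau^2$, and since the previous step already yields $|\tau|=O(\delta^{1/8})$, the contribution $\tau^2=O(\delta^{1/4})$ is dominated by the $O(\delta^{1/16})$ term. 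Taking a square root produces the desired $O(\delta^{1/32})$ bound in~\eqref{eq:obs-ac-bound}.

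The conceptual architecture is clean, so the principal obstacle is simply tracking the $\delta$ exponents through the chain: the $1/8$ loss from Proposition~\ref{prop:proj}, the further $1/16$ loss from Claim~\ref{claim:cs-cor}, and the final square root that turns $1/16$ into $1/32$. One also has to confirm carefully that the triangle-inequality steps in Step~1 do not introduce additional loss, and that the operator-norm bound $\|A_1\pm A_2\|_{\schf}\le 2$ keeps the constant in Claim~\ref{claim:cs-cor} under control.
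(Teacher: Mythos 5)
Your proof is correct and follows essentially the same route as the paper's: pass the metric hypotheses through Proposition~\ref{prop:proj}, use the sum identity (in your formulation, $\mu_{A_1-A_2}+\mu_{A_1+A_2}=4$; in the paper's, $(P_1-P_2)^2+(P_1+P_2-\Id)^2=\Id$) to force near-saturation of Cauchy--Schwarz, and invoke Claim~\ref{claim:cs-cor}. The only cosmetic difference is in the last step, where you isolate $\{A_1,A_2\}$ via the Pythagorean identity $\|\{A_1,A_2\}\sigma^{1/2}\|_F^2=\|(\{A_1,A_2\}-\tau\Id)\sigma^{1/2}\|_F^2+\tau^2$ applied to $R=A_1-A_2$ alone, while the paper applies Claim~\ref{claim:cs-cor} to both $(P_1-P_2)^2-\tfrac12\Id$ and $(P_1-Q_2)^2-\tfrac12\Id$ and subtracts; the exponent bookkeeping comes out identically.
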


\begin{proof}
Using first~\eqref{eq:obs-bound} and then the Cauchy-Schwarz inequality and $\Tr(\sigma) = 1$,
 \begin{align}
\frac{1-2\delta}{2} \le \|X_1-X_2\|_{\schs}^2 &\leq \big\|\sigma^{1/2} (P_1-P_2)\sigma^{1/2}\big\|_{\schs}^2 + O\big(\delta^{1/8}\big)\notag\\
&\leq \Tr\big( (P_1-P_2)^2 \sigma\big)+ O\big(\delta^{1/8}\big) \label{eq:normapprox-2}
\end{align}
and similarly for the three other pairs ($X_1-Y_2$, $Y_1-X_2$, and $Y_1-Y_2$). 
Summing those four inequalities, we get
\begin{align*}
2(1-2\delta) &\le 
\Tr\big( (P_1-P_2)^2 \sigma\big) + \Tr\big( (P_1-Q_2)^2 \sigma\big)
+\Tr\big( (Q_1-P_2)^2 \sigma\big) + \Tr\big( (Q_1-Q_2)^2 \sigma\big)+ O\big(\delta^{1/8}\big)\\
&=
2 \big( \Tr\big( (P_1-P_2)^2 \sigma\big) + \Tr\big( (P_1+P_2-\Id)^2 \sigma\big) \big)
+ O\big(\delta^{1/8}\big)\\
&= 2 + O\big(\delta^{1/8}\big)\;,
\end{align*}
where the first equality uses $Q_1 -Q_2 = P_2-P_1$ and $Q_1-P_2 = Q_2 - P_1$, and the second uses $\Tr(\sigma)=1$.
Therefore all inequalities in~\eqref{eq:normapprox-2} must be equalities, up to $O(\delta^{1/8})$. 
Applying Claim~\ref{claim:cs-cor} to the tightness of~\eqref{eq:normapprox-2}, it follows that 
\begin{equation}\label{eq:ac-1-0}
 \big\|\big((P_1-P_2)^2 - \frac{1}{2} \Id\big)\sigma^{1/2} \big\|_F^2 \,=\, O\big(\delta^{1/16}\big)\;,
\end{equation}
and similar bounds for the three other pairs. To conclude the proof, use the triangle inequality, Eq.~\eqref{eq:ac-1-0}, and the observation that by writing $A_1=2P_1-\Id$ and $A_2=2P_2-\Id$, 
\begin{align*}
\big\{A_1,A_2 \big\} &\,=\, 
4 P_1 P_2 + 4P_2 P_1 - 4P_1 - 4P_2 + 2 \Id \\
&\, = \,
2\big((P_1-Q_2)^2 - (P_1-P_2)^2\big)\;.
\end{align*}
%
\end{proof}

\section{Replacing \texorpdfstring{$\sigma$}{sigma} with identity}

The anti-commutation relations obtained in Proposition~\ref{prop:ac} involve the arbitrary positive semidefinite operator $\sigma$. In this section we show that up to a small loss of parameters we may without loss of generality assume that $\sigma = \Id$. Intuitively, this follows from the approximate commutation relation
\begin{align}
\big\|[A,\sigma^{1/2}] \big\|_F \,=\,O\big(\delta^{1/8}\big)\;,\label{eq:obs-com-bound}
\end{align}
which follows immediately from the definition of the observable $A = P-Q$ and~\eqref{eq:proj-com-bound}. 
If $\sigma$ has two eigenvalues with a big gap between them, then it is not hard to see that $A$ satisfying~\eqref{eq:obs-com-bound} must have a corresponding approximate block structure, in which case we can restrict to one of the blocks
and obtain $\sigma=\Id$ as desired.  
The difficulty is in carefully handling the general case, where some eigenvalues of $\sigma$ might be closely spaced.
The following lemma does this, using an elegant argument borrowed from~\cite{Slofstra2018}.
\begin{lemma}\label{lem:connes}
Let $\sigma$ be a positive semidefinite matrix with trace $1$, and $T_1,\ldots,T_k$ and $X_1,\ldots,X_\ell$ Hermitian operators such that $X_j^2=\Id$ for all $j\in\{1,\ldots,\ell\}$. Let
\[\eps \,= \, \frac{1}{k}\sum_{i=1}^k \big\|T_i\sigma^{1/2}\big\|_F^2\qquad\text{ and }\qquad\delta \,=\, \frac{1}{\ell}\sum_{j=1}^\ell \big\|[X_j,\sigma^{1/2}]\big\|_F^2\;.\]
Then there exists a nonzero orthogonal projection $R$ such that 
\[ \frac{1}{k} \sum_{i=1}^k \big\|T_i R\big\|_F^2\,=\, O(\eps)\Tr(R)\qquad\text{ and }\qquad  \frac{1}{\ell}\sum_{j=1}^\ell \big\|[X_j,R]\big\|_F^2 \,=\,O\big(\delta^{1/2}\big)\Tr(R)\;.\]
\end{lemma}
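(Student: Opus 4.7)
The plan is to construct $R$ as a spectral projection of $\sigma$. For each $t > 0$, let $P_t = \mathbf{1}_{\sigma\geq t}$ denote the orthogonal projection onto the span of eigenvectors of $\sigma$ whose eigenvalue is at least $t$. The layer-cake identity gives $\sigma = \int_0^\infty P_t\, dt$, so in particular $\int_0^\infty \Tr(P_t)\, dt = \Tr(\sigma) = 1$. I will select a suitable $t$ and set $R = P_t$.

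First, I would integrate the two quantities of interest against $t$. Since each $T_i$ is Hermitian,
\[ \int_0^\infty \|T_i P_t\|_F^2\, dt \,=\, \int_0^\infty \Tr(T_i^2 P_t)\, dt \,=\, \Tr(T_i^2 \sigma) \,=\, \|T_i \sigma^{1/2}\|_F^2, \]
so that $\int_0^\infty \alpha(t)\, dt = \eps$, where $\alpha(t) := \frac{1}{k}\sum_i \|T_i P_t\|_F^2$. For the commutator quantity, diagonalize $\sigma = \sum_u \lambda_u \proj{v_u}$ and set $(X_j)_{uv} = \bra{v_u}X_j\ket{v_v}$; a direct computation shows
\[ \int_0^\infty \|[X_j, P_t]\|_F^2\, dt \,=\, \sum_{u,v} |\lambda_u - \lambda_v|\, |(X_j)_{uv}|^2, \]
using that $\int_0^\infty (\mathbf{1}_{\lambda_u\geq t} - \mathbf{1}_{\lambda_v\geq t})^2\, dt = |\lambda_u - \lambda_v|$.

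The main technical step, which I expect to be the heart of the argument, is to bound this in terms of $\|[X_j, \sigma^{1/2}]\|_F$. I would factor $|\lambda_u - \lambda_v| = |\lambda_u^{1/2} - \lambda_v^{1/2}|(\lambda_u^{1/2} + \lambda_v^{1/2})$ and apply Cauchy--Schwarz:
\[ \int_0^\infty \|[X_j, P_t]\|_F^2\, dt \,\leq\, \|[X_j, \sigma^{1/2}]\|_F \cdot \Bigl(\sum_{u,v} (\lambda_u^{1/2} + \lambda_v^{1/2})^2 |(X_j)_{uv}|^2\Bigr)^{1/2}. \]
The second factor is at most $2$, since $(\lambda_u^{1/2} + \lambda_v^{1/2})^2 \leq 2(\lambda_u + \lambda_v)$, and $X_j^2 = \Id$ together with $\Tr(\sigma) = 1$ give $\sum_{u,v}(\lambda_u + \lambda_v)|(X_j)_{uv}|^2 = 2\Tr(\sigma X_j^2) = 2$. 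Averaging over $j$ and applying the mean-versus-root-mean-square inequality then yields $\int_0^\infty \beta(t)\, dt \leq 2\sqrt{\delta}$ for $\beta(t) := \frac{1}{\ell}\sum_j \|[X_j, P_t]\|_F^2$. This conversion of a weighted-by-$\sigma^{1/2}$ commutator norm into an integrated layer-cake quantity is the step where the square root (hence the $\delta^{1/2}$ in the conclusion) enters.

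With the three integral bounds $\int \Tr(P_t)\, dt = 1$, $\int \alpha(t)\, dt = \eps$, and $\int \beta(t)\, dt \leq 2\sqrt{\delta}$, I would conclude by a Markov-type averaging. Let $B_1 = \{t : \alpha(t) > 4\eps \Tr(P_t)\}$ and $B_2 = \{t : \beta(t) > 8\sqrt{\delta}\, \Tr(P_t)\}$; then by Markov $\int_{B_1} \Tr(P_t)\, dt < 1/4$ and $\int_{B_2} \Tr(P_t)\, dt < 1/4$. Since $\int_0^\infty \Tr(P_t)\, dt = 1$, the complement $B_1^c \cap B_2^c$ carries $\Tr(P_t)$-mass strictly greater than $1/2$, and in particular contains some $t$ with $\Tr(P_t) > 0$; taking $R = P_t$ for any such $t$ completes the proof.
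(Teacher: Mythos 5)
Your proof is correct and follows the same high-level structure as the paper's: layer-cake decomposition $\sigma = \int_0^\infty P_t\, dt$, integral estimates on the two quantities of interest, and an averaging step to select a good $t$. The one substantive difference is how you bound $\int_0^\infty \|[X_j, P_t]\|_F^2\, dt$. The paper invokes Connes' trick (Claim~\ref{L:connestrick}) as a black box, applied with $\rho = \sigma^{1/2}$ and $\rho' = X_j\sigma^{1/2}X_j$; you instead prove the needed estimate directly by diagonalizing $\sigma$, computing $\int_0^\infty \|[X_j, P_t]\|_F^2\, dt = \sum_{u,v}|\lambda_u-\lambda_v|\,|(X_j)_{uv}|^2$, and applying Cauchy--Schwarz to the factorization $|\lambda_u-\lambda_v| = |\lambda_u^{1/2}-\lambda_v^{1/2}|(\lambda_u^{1/2}+\lambda_v^{1/2})$ together with $\sum_{u,v}(\lambda_u+\lambda_v)|(X_j)_{uv}|^2 = 2\Tr(\sigma X_j^2)=2$. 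In effect you reprove the special case of Connes' lemma that is actually needed here (the second Cauchy--Schwarz factor is exactly $\|\{X_j,\sigma^{1/2}\}\|_F$), making the argument self-contained. Your explicit Markov-set bookkeeping at the end is a cleaner rendering of the paper's one-line ``add $(1/\eps)$ times the first inequality and $(1/\delta^{1/2})$ times the second'' averaging. The trade-off is the usual one: citing Connes' lemma is shorter and more general (it holds for arbitrary $\rho'$), while your computation is elementary and requires no external reference.
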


\begin{proof}
The proof relies on two simple claims. 
For a Hermitian matrix $\rho$ and $a \ge 0$, let $\chi_{\geq a}(\rho)$ denote the projection on the direct sum of eigenspaces of $\rho$
with eigenvalues at least $a$. 
The first claim appears as~\cite[Lemma 5.6]{Slofstra2018}. 

\begin{claim}\label{L:int}
    Let $\rho$ be positive semidefinite. Then
    \begin{equation*}
        \int_{0}^{+\infty} \chi_{\geq \sqrt{a}}(\rho)\, da \,=\, \rho^2\;.
    \end{equation*}
\end{claim}

The second is due to Connes~\cite[Lemma 1.2.6]{Connes76}. We state the claim as it appears in~\cite[Lemma 5.5]{Slofstra2018}. 

\begin{claim}[\cite{Connes76}, Lemma 1.2.6]\label{L:connestrick}
    Let $\rho, \rho'$ be positive semidefinite. Then 
    \begin{equation*}
        \int_0^{+\infty} \big\|\chi_{\geq \sqrt{a}}(\rho) 
            - \chi_{\geq \sqrt{a}}(\rho')\big\|^2_F\,da 
            \,\leq \, \|\rho - \rho'\|_F \|\rho + \rho'\|_F\;.
    \end{equation*}
\end{claim}

Both claims can be proven by direct calculation, writing out the spectral decomposition of $\rho,\rho'$ and using Fubini's theorem (exchanging summation indices). The proof is given in~\cite{Slofstra2018}.

Applying Claim~\ref{L:int} with $\rho = \sigma^{1/2}$,
\begin{equation}\label{eq:connes-1}
 \frac{1}{k} \int_0^{+\infty} \sum_{i=1}^k \big\|T_i\,\chi_{\geq \sqrt{a}}\big(\sigma^{1/2}\big)\big\|_F^2\, da 
\,=\, 
\frac{1}{k} \sum_{i=1}^k \|T_i \sigma^{1/2}\big\|_F^2
\,\leq\,
\eps\int_0^{+\infty} \Tr\big(\chi_{\geq \sqrt{a}}\big(\sigma^{1/2}\big)\big)\,da\;,
\end{equation}
where the 
first equality uses 
$\|T_i\,\chi_{\geq \sqrt{a}}(\sigma^{1/2})\|_F^2 = 
\Tr(T_i^2 \,\chi_{\geq \sqrt{a}}(\sigma^{1/2}))$
and the second inequality follows from Claim~\ref{L:int} and $\Tr(\sigma)=1$.
Applying Claim~\ref{L:connestrick} with $\rho = \sigma^{1/2}$ and $\rho' =  X_j \sigma^{1/2}X_j$, and using that $X_j$ is Hermitian and unitary, 
\begin{align}
\frac{1}{\ell}  \int_0^{+\infty} \sum_{j=1}^\ell \big\|\big[X_j,\chi_{\geq \sqrt{a}}\big(\sigma^{1/2}\big)\big]\big\|_F^2 \,da 
&\leq 
\frac{1}{\ell}  \sum_{j=1}^\ell \|[X_j, \sigma^{1/2}]\big\|_F\|\{X_j, \sigma^{1/2}\}\big\|_F\notag\\
&\leq O\big(\delta^{1/2}\big)\int_0^{+\infty} \Tr\big(\chi_{\geq \sqrt{a}}\big(\sigma^{1/2}\big)\big)\,da\;,\label{eq:connes-2}
\end{align}
where the second inequality follows from the Cauchy-Schwarz inequality and uses $\Tr(\sigma)=1$ and $\|X_j\|_{\schf}\leq 1$. Adding $(1/\eps$) times~\eqref{eq:connes-1} and $(1/\delta^{1/2})$ times~\eqref{eq:connes-2}, there exists an $a\geq 0$ such that both inequalities are satisfied simultaneously (up to a multiplicative constant factor loss) with a nonzero right-hand side, for that $a$. 
Then $R = \chi_{\geq \sqrt{a}}(\sigma^{1/2})$ is a projection that satisfies the conclusions of the lemma. 
\end{proof}

Combining Proposition~\ref{prop:ac} and Lemma~\ref{lem:connes}, we obtain the following.

\begin{proposition} \label{prop:me}
Let $n,d \geq 1$ be integers, $0\leq \delta \leq 1$, $X_1,Y_1,\ldots,X_n,Y_n$ operators on $\C^d$, and $\sigma$ positive semidefinite of trace $1$, such that for each $i\in\{1,\ldots,n\}$, $\sigma,X_i,Y_i$ satisfy~\eqref{eq:norm-cond-1},~\eqref{eq:norm-cond-2},~\eqref{eq:norm-cond-3}, and such that for each $i\neq j\in\{1,\ldots,n\}$, $(X_i,Y_i,X_j,Y_j)$ satisfy~\eqref{eq:antic-cond}.
Then there exist a $d'\leq d$ and observables $A'_1,\ldots,A'_n$ on $\C^{d'}$ such that 
\[ 
\frac{2}{n(n-1)} \sum_{1 \le i < j \le n}\, \big\| \{A'_i,A'_j\}  \big\|_f^2 \,=\, O\big(\delta^{1/32}\big)\;.
\]
\end{proposition}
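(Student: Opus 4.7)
My plan is to extract, for each $i$, an observable $A_i$ on $\C^d$ that approximately anti-commutes with its partners in the $\sigma$-weighted Frobenius norm (via Propositions~\ref{prop:proj} and~\ref{prop:ac}), to invoke Lemma~\ref{lem:connes} to ``remove'' the weight $\sigma^{1/2}$ on a suitable subspace of $\C^d$, and finally to round the compressed operators to genuine observables on that subspace.

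Concretely, I first apply Proposition~\ref{prop:proj} to each $(X_i,Y_i,\sigma)$ to obtain projections $P_i,Q_i$ with $P_i+Q_i=\Id$, and set $A_i:=P_i-Q_i$, an observable on $\C^d$. From~\eqref{eq:proj-com-bound} and the triangle inequality, $\|[A_i,\sigma^{1/2}]\|_F=O(\delta^{1/8})$. For each pair $i<j$, Proposition~\ref{prop:ac} yields $\|\{A_i,A_j\}\sigma^{1/2}\|_F=O(\delta^{1/32})$. I then apply Lemma~\ref{lem:connes} with the $k=n(n-1)/2$ Hermitian operators $T_{ij}:=\{A_i,A_j\}$ ($i<j$) in place of $T_1,\ldots,T_k$, and the $\ell=n$ observables $A_1,\ldots,A_n$; the averaged input parameters are $\eps = O(\delta^{1/16})$ and $\delta_{\mathrm{lem}} = O(\delta^{1/4})$. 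This produces a nonzero orthogonal projection $R$, of trace $d':=\Tr(R)$, satisfying
\begin{align*}
\frac{2}{n(n-1)} \sum_{i<j} \|\{A_i,A_j\}R\|_F^2 &\,=\, O(\delta^{1/16})\,d' \, , \\
\frac{1}{n} \sum_i \|[A_i,R]\|_F^2 &\,=\, O(\delta^{1/8})\,d' \, .
\end{align*}

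To produce observables on $\C^{d'}\cong\mathrm{Im}(R)$, I consider the Hermitian contraction $B_i:=RA_iR$ restricted to $\mathrm{Im}(R)$ and take $A'_i:=\mathrm{sign}(B_i)$ via the spectral calculus (with $\mathrm{sign}(0):=+1$); this $A'_i$ is a genuine observable on $\C^{d'}$. A direct expansion using $A_i^2=\Id$ and $R^2=R$ gives $B_i^2-\Id = RA_i[R,A_i]R$ on $\mathrm{Im}(R)$, so $\|B_i^2-\Id\|_F \le \|[A_i,R]\|_F$. Since all eigenvalues $\lambda$ of $B_i$ lie in $[-1,1]$, the pointwise bound $(\mathrm{sign}(\lambda)-\lambda)^2\le(1-\lambda^2)^2$ (checked separately on the two signs of $\lambda$) gives $\|A'_i-B_i\|_F \le \|B_i^2-\Id\|_F \le \|[A_i,R]\|_F$. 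For the anti-commutator, expanding $B_iB_j = RA_iA_jR + RA_i[R,A_j]R$ and symmetrically yields the identity
\[
\{B_i,B_j\} \,=\, R\{A_i,A_j\}R + RA_i[R,A_j]R + RA_j[R,A_i]R\;,
\]
which bounds $\|\{B_i,B_j\}\|_F^2$ by (three times) a sum of $\|\{A_i,A_j\}R\|_F^2$ and the two commutator norms $\|[R,A_i]\|_F^2$, $\|[R,A_j]\|_F^2$. Together with $\|\{A'_i,A'_j\}-\{B_i,B_j\}\|_F \le 2(\|A'_i-B_i\|_F+\|A'_j-B_j\|_F)$ and averaging over $(i,j)$, this gives $\frac{2}{n(n-1)}\sum_{i<j}\|\{A'_i,A'_j\}\|_f^2 = O(\delta^{1/16})$, which implies the claimed $O(\delta^{1/32})$ bound since $\delta\le 1$.

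The step I expect to be the most delicate is the rounding of $B_i$ to a genuine observable $A'_i$: the rounding procedure must introduce an error controlled by exactly the quantity $\|B_i^2-\Id\|_F$ (equivalently $\|[A_i,R]\|_F$) that Lemma~\ref{lem:connes} bounds on the chosen subspace, and the sign-function rounding is tailored to achieve this via the pointwise inequality noted above.
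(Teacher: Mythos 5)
Your proposal is correct and follows essentially the same route as the paper's proof: obtain observables $A_i = P_i - Q_i$ from Propositions~\ref{prop:proj} and~\ref{prop:ac}, apply Lemma~\ref{lem:connes} with $T_{ij}=\{A_i,A_j\}$ and the $A_i$, compress to $R A_i R$ on $\mathrm{Im}(R)$, round via the sign function using $(\sign(\lambda)-\lambda)^2 \le (1-\lambda^2)^2$, and combine by triangle inequality and averaging. Your slightly more explicit tracking of the exponent ($\eps = O(\delta^{1/16})$, then relaxed to $O(\delta^{1/32})$ since $\delta\le 1$) and the explicit $\sign(0)=+1$ convention are small refinements of, but not departures from, the paper's argument.
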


\begin{proof}
Applying Proposition~\ref{prop:ac} and~\eqref{eq:obs-com-bound} we deduce the existence of observables $A_1,\ldots,A_n$ on $\C^d$ such that
\begin{align}
\forall i\neq j\in\{1,\ldots,n\}\;,\quad \big\| \{A_i,A_j\} \sigma^{1/2} \big\|_F^2 \,=\, O\big(\delta^{1/32}\big)\;,\\
\forall i\in\{1,\ldots,n\}\;,\quad\big\|[A_i,\sigma^{1/2}] \big\|_F^2\,=\,O\big(\delta^{1/4}\big)\;.
\end{align}
(Note that this uses that for each $i\in\{1,\ldots,n\}$, the projections $P_i,Q_i$ used to define $A_i=P_i-Q_i$ depend on $X_i$ and $Y_i$ only.)  
Next apply Lemma~\ref{lem:connes} with $T_{ij} =  \{A_i,A_j\}$ and $X_i=A_i$. The lemma gives
an orthogonal projection $R$ on $\C^d$ such that 
\begin{align}
\frac{2}{n(n-1)} \sum_{ 1 \le i < j \le n}  \big\| \{A_i,A_j\} R\big\|_F^2 \,=\, O\big(\delta^{1/32}\big)\Tr(R)\;,\label{eq:me-1}\\
\frac{1}{n}\sum_{i\in\{1,\ldots,n\}} \quad\big\|[A_i,R] \big\|_F^2\,=\,O\big(\delta^{1/8}\big)\Tr(R)\;.\label{eq:me-2}
\end{align}
For $i\in\{1,\ldots,n\}$ let $\tilde{A}_i = RA_iR$. Then using $A_i^2=\Id$ and $R^2=R$,
\begin{align*}
\big\| \tilde{A}_i^2-R\big\|_F &= \big\| R[A_i,R]A_iR\|_F\\
&\leq \|[A_i,R]\|_F \;.
\end{align*}
Defining the observable $A'_i = R\sign(\tilde{A}_i)R$ and using the inequality $(\sign(x)-x)^2 \le (x^2-1)^2$ valid for all $x \in [-1,1]$, 
we see that $\|A'_i - \tilde{A}_i\|_F \leq \| \tilde{A}_i^2-R\|_F$. 
Then using~\eqref{eq:me-2},
\begin{equation}\label{eq:me-3}
\frac{1}{n}\sum_{i\in\{1,\ldots,n\}} \quad\big\|A'_i - \tilde{A}_i \big\|_F^2 \,=\,O\big(\delta^{1/8}\big)\Tr(R)\;.
\end{equation}
For any $i,j$, using the triangle inequality
\begin{align*}
\big\| \{A'_i,A'_j\}  \big\|_F 
&\leq 
\big\| \{\tilde{A}_i,\tilde{A}_j\}  \big\|_F +  2\big(\big\| A'_i - \tilde{A}_i  \big\|_F + \big\| A'_j - \tilde{A}_j  \big\|_F\big) \\
&\leq 
\big\| \{A_i,A_j\} R \big\|_F +  2\big(\|[A_i,R]\|_F + \|[A_j,R]\|_F\big) + 2\big(\big\| A'_i - \tilde{A}_i  \big\|_F + \big\| A'_j - \tilde{A}_j  \big\|_F\big)\;,
\end{align*}
where the second inequality uses the definition of $\tilde{A}_i$ and $\tilde{A}_j$. 
Squaring this inequality and using Cauchy-Schwarz gives
\begin{align*}
\big\| \{A'_i,A'_j\}  \big\|_F^2 
&\leq 
O\Big( \big\| \{A_i,A_j\} R \big\|_F^2 +  \|[A_i,R]\|_F^2 + \|[A_j,R]\|_F^2 + \big\| A'_i - \tilde{A}_i  \big\|_F^2 + \big\| A'_j - \tilde{A}_j  \big\|_F^2 \Big)\;,
\end{align*}
Averaging over all pairs $i\neq j$ and using~\eqref{eq:me-1} and~\eqref{eq:me-3} proves the proposition. 
\end{proof}

\begin{proof}[Proof of Theorem~\ref{thm:main}]
By subtracting $O$ from all the operators, we can assume without loss of generality that $O$ is zero.
Let $U$ be a unitary such that $\sigma=U|\sigma|$, as given by the polar decomposition. Multiplying all operators on the left by $U^{-1}$, we may further assume that $\sigma$ is positive semidefinite. Dividing by $\|\sigma\|_{\schs}$, we may assume that $\Tr(\sigma)=1$, and $\delta$ is replaced by $\delta'=O(\delta)$. 
Eq.~\eqref{eq:ac-intro} now follows from Proposition~\ref{prop:me}.
\end{proof}

\section{Dimension bounds}\label{sec:dimlowerbound}

The following lemma shows that pairwise approximately anti-commuting observables only exist in large dimension. The observation is not new; see, e.g.,~\cite{ostrev2016entanglement,slofstra2018group}. We give a proof that closely follows~\cite{slofstra2018group}.
Theorem~\ref{thm:realmain} follows immediately by combining the lemma with Theorem~\ref{thm:main}, 
provided $\delta^{1/64} \leq C/n^{3}$ for some sufficiently small constant $C$.

\begin{lemma}\label{lem:clifford}
Let $n\geq 2$ and $d \ge 1$ be integers, $0\leq \eps \leq 1$, and $A_1,\ldots,A_n$ observables on $\C^d$ such that
\begin{equation}\label{eq:clifford-cond}
 \forall i\neq j\in\{1,\ldots,n\}\;,\quad \big\| \{A_i,A_j\}  \big\|_f\,\leq\,\eps\;.
\end{equation}
Then there are universal constants $c,C>0$ such that if $n^2\eps \leq c$ then $d\geq (1-C n^4 \eps^2) 2^{\lfloor n/2\rfloor}$.
\end{lemma}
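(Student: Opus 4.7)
The idea is that $A_1, \ldots, A_n$ form an approximate representation of the Clifford algebra on $n$ generators, and one transfers the standard dimension lower bound $d \geq 2^{\lfloor n/2 \rfloor}$ for exact Clifford representations to the approximate setting via a Gram-matrix argument over monomials in the $A_i$'s.

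First, for each subset $R = \{i_1 < i_2 < \cdots < i_k\} \subseteq \{1,\ldots,n\}$ define the monomial $A_R := A_{i_1} A_{i_2} \cdots A_{i_k}$ (with $A_\emptyset := \Id$), so that each $A_R$ has operator norm at most $1$. The hypothesis $\|\{A_i, A_j\}\|_f \leq \eps$ allows one to swap adjacent generators at a cost of at most $\eps$ in $\|\cdot\|_f$, since multiplying by any product of observables on the left or right preserves that norm. A bubble-sort argument then yields the approximate Clifford relations
\[
\|A_S A_T - \eta(S,T)\, A_{S \Delta T}\|_f \,\leq\, |S|\,|T|\,\eps \,\leq\, O(n^2)\eps
\]
for the appropriate Clifford sign $\eta(S,T) \in \{\pm 1\}$, and analogously $\|A_S^* - (-1)^{\binom{|S|}{2}} A_S\|_f \leq O(n^2)\eps$. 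Taking $T = S$ gives $\|A_S^2 \mp \Id\|_f = O(n^2 \eps)$, hence $\|A_S\|_f^2 = 1 \pm O(n^2 \eps)$. Next, for every nonempty $U \subsetneq \{1,\ldots,n\}$ the normalized trace $\alpha_U := \Tr(A_U)/d$ satisfies $|\alpha_U| \leq O(n \eps)$: choose $j \notin U$ when $|U|$ is odd and $j \in U$ otherwise, use the telescoping bound $\|A_j A_U + A_U A_j\|_f \leq O(n \eps)$ (requiring only $|U|$ swaps with a fixed $j$), and invoke cyclicity of the trace to deduce $\Tr(A_U) \approx -\Tr(A_U)$.

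The main step is then to assemble the $2^n \times 2^n$ Gram matrix $G_{S,T} = \langle A_S, A_T \rangle_f = \Tr(A_S^* A_T)/d$ and lower-bound its rank, since $\dim \mathrm{span}\{A_R\} = \rank(G)$ and this dimension is at most $d^2$. Via the Cauchy--Schwarz rank inequality $\rank(G) \geq (\Tr G)^2 / \|G\|_F^2$, and using the diagonal estimate $\Tr(G) \geq 2^n(1 - O(n^2\eps))$, the goal reduces to the sharp upper bound $\|G\|_F^2 \leq 2^n(1 + O(n^4\eps^2))$. This is the step I expect to be the main obstacle: a naive entrywise estimate gives only $\|G\|_F^2 \leq O(4^n n^4 \eps^2)$, which is far too weak. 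Instead, one exploits the cocycle structure---$G_{S,T} = \eta'(S,T)\,\alpha_{S \Delta T} + \mu_{S,T}$ with $|\mu_{S,T}| \leq O(n^2 \eps)$, so $G$ is, up to entrywise $O(n^2\eps)$ noise, a twisted convolution on $(\Z/2)^n$ whose Frobenius norm collapses to $2^n \sum_U |\alpha_U|^2$---and combines this with the near-identity $\frac{1}{2^n}\sum_R A_R^* A_R \approx \Id$ (which provides an approximate Plancherel identity for the $\alpha_U$) to conclude $\sum_U |\alpha_U|^2 \leq 1 + O(n^4 \eps^2)$, converting the bad $4^n$ factor into the desired $2^n$.

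Putting the ingredients together yields $d^2 \geq 2^n(1 - O(n^4 \eps^2))$ for even $n$, hence $d \geq 2^{n/2}(1 - O(n^4\eps^2))$. For odd $n$, the extremal monomial $A_{\{1,\ldots,n\}}$ is approximately central (it almost commutes with every $A_i$ by the parity of $n$) and approximately squares to a scalar, so only the $2^{n-1}$ monomials indexed by $R \subseteq \{1,\ldots,n-1\}$ carry independent information; the same analysis on this subcollection gives $d \geq 2^{(n-1)/2}(1 - O(n^4\eps^2))$. In both cases one obtains $d \geq (1 - C n^4 \eps^2)\, 2^{\lfloor n/2\rfloor}$, as required.
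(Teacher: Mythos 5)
You take a genuinely different route from the paper, which reduces to approximate representations of the finite group $C(n)$ and then invokes the Gowers--Hatami stability theorem (via Lemma 3.1 and Lemma 3.4 of Slofstra) as a black box. Your direct Gram-matrix argument is an interesting alternative, and the first half is sound: the monomials $A_R$ are unitary, so each diagonal entry of $G$ equals $1$ exactly and $\Tr(G)=2^n$; the bubble-sort bound $\|A_SA_T-\eta(S,T)A_{S\Delta T}\|_f\le |S|\,|T|\,\eps$ is correct; the conjugation trick $A_jA_UA_j+A_U=(A_jA_U+A_UA_j)A_j$ together with cyclicity does give $|\alpha_U|=O(n\eps)$; and the reduction $d^2\ge\rank(G)\ge(\Tr G)^2/\|G\|_F^2$ is valid.

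However, there is a genuine gap at exactly the step you flag as the main obstacle, and the sketch you offer does not close it. Writing $G_{S,T}=\eta'(S,T)\alpha_{S\Delta T}+\mu_{S,T}$ with $|\mu_{S,T}|\le|S|\,|T|\,\eps$, the Frobenius norm squared of the noise matrix is
\[
\sum_{S,T}|\mu_{S,T}|^2 \;\le\;\eps^2\Big(\sum_{S}|S|^2\Big)^2 \;=\;\eps^2\big(2^{n-2}n(n+1)\big)^2 \;=\;\Theta\big(4^n n^4\eps^2\big),
\]
and there is no cancellation available in a sum of squares. Your ``twisted convolution'' observation and the claimed Plancherel estimate $\sum_U|\alpha_U|^2\le 1+O(n^4\eps^2)$ only control the ideal part $\|G^{\mathrm{ideal}}\|_F^2=2^n\sum_U|\alpha_U|^2$; they say nothing about $\mu$, which still contributes $\Theta(4^n n^4\eps^2)$ to $\|G\|_F^2$ and thus caps the rank lower bound at $4^n/(2^n+\Theta(4^n n^4\eps^2))$, far below $2^n(1-O(n^4\eps^2))$ once $2^n n^4\eps^2\gg 1$. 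Separately, the ``near-identity'' $\frac{1}{2^n}\sum_R A_R^*A_R\approx\Id$ that you invoke is exactly $\Id$ (each $A_R$ is unitary), so it is vacuous and does not produce an approximate Plancherel identity; and even granting the claimed $\sum_U|\alpha_U|^2$ bound, the cross term between $G^{\mathrm{ideal}}$ and $\mu$ is of order $2^{3n/2}n^2\eps$, also overwhelming the $2^n$ target. The essential difficulty your approach runs into --- that a uniform $O(n^2\eps)$ per-pair error, when squared and summed over all $4^n$ pairs, is far too large --- is precisely what the Gowers--Hatami theorem is designed to avoid: it converts a \emph{uniform} defect bound into a dimension rigidity statement without ever summing errors over the whole group. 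I'd suggest either finding a structural reason why the $\mu_{S,T}$ are much smaller than $n^2\eps$ on average (which would need justification beyond the triangle inequality), or adopting the stability-theorem route as in the paper.
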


\begin{proof}
The idea for the proof is that if $\eps=0$, then the $A_i$ would induce a representation of the (finite) finitely presented group
\[C(n)=\big\langle J,x_1,\ldots,x_n:\, Jx_i=x_iJ, J^2=x_i^2=1,x_ix_j=Jx_jx_i \,\text{ for all }i\neq j\in\{1,\ldots,n\}\big\rangle\;\]
such that moreover, the representation maps $J$ to $-\Id$. 
Depending on the parity of $n$, the group $C(n)$ has either one or two irreducible representations such that $J\mapsto -\Id$, each of dimension $2^{\lfloor n/2\rfloor}$, implying a corresponding lower bound on the dimension $d$ of the $A_i$. 
The goal for the proof is to extend this lower bound to $\eps >0$. This is done in ~\cite{slofstra2018group} (see Lemma 3.1 and Lemma 3.4). There are two steps: first, we use $A_i$ satisfying~\eqref{eq:clifford-cond} to define an approximate homomorphism on $C(n)$ such that $J\mapsto -\Id$. 
Second, we use a stability theorem due to Gowers and Hatami~\cite{gowers2015inverse} to argue that any such approximate homomorphism is close to an exact one, and hence must have large dimension. 

The first step is given by the following claim, a slightly simplified version of~\cite[Lemma 3.4]{slofstra2018group}. 

\begin{claim}[Lemma 3.4 in \cite{slofstra2018group}]\label{lem:s1}
Let $A_1,\ldots,A_n$ satisfy the conditions of Lemma~\ref{lem:clifford}. For any $x = J^a x_{i_1}\cdots x_{i_k}$, where $1\leq i_1<\cdots<i_k\leq n$, define $\phi(x)=(-1)^a A_{i_1}\cdots A_{i_n}$. Then  $\phi$ is an $\eta = n^2{\eps}$-homomorphism from $C(n)$ to $U(d)$, i.e., for every $x,y\in C(n)$ it holds that $\|\phi(xy)-\phi(x)\phi(y)\|_f \leq \eta$.
\end{claim}

\begin{proof}
Any element of $C(n)$ has a unique representation of the form described in the claim. Let $x,y\in C(n)$ such that $x = J^a x_{i_1}\cdots x_{i_k}$ and $y = J^b x_{j_1}\cdots x_{j_\ell}$. To write $xy$ in canonical form involves at most $n^2$ application of the anti-commutation relations to sort the $\{x_i,x_j\}$ (together with a number of commutations of $J$ with the $x_i$, that we need not count since in our representation $\phi(J)=-\Id$ commutes with all $A_i$), and finally at most $n$ application of the relations $x_i^2=1$. When considering $\phi(x)$ and $\phi(y)$, the only operation that is not exact is the anti-commutation between different $A_i,A_j$. Using the triangle inequality, $\|\phi(xy)-\phi(x)\phi(y)\|_f \leq n^2 {\eps}$, as desired. 
\end{proof}

The second step of the proof is given by the following lemma from~\cite{slofstra2018group}, which builds on~\cite{gowers2015inverse}. 

\begin{lemma}[Lemma 3.1 in \cite{slofstra2018group}]\label{lem:s2}
Let $\phi$ be a map from $C(n)$ to the set of unitaries in $d$ dimensions such that $\phi$ is an $\eta$-homomorphism for some $0\leq \eta\leq 1$. Suppose furthermore that $\|\phi(J)-\Id\|_f > 42 \eta$. Then $d \geq (1-4\eta^2)2^{\lfloor n/2\rfloor}$. 
\end{lemma}

The proof of the lemma first applies the results from~\cite{gowers2015inverse} to argue that $\phi$ must be close to an exact representation of $C(n)$, and then concludes using that all irreducible representations of $C(n)$ that send $J$ to $(-\Id)$ have dimension $2^{\lfloor n/2\rfloor}$. 

Combining Claim~\ref{lem:s1} and Lemma~\ref{lem:s2} proves Lemma~\ref{lem:clifford}. 
\end{proof}

We conclude this section by a construction showing that the metric space implied by the $(2n+2)$ points from Theorem~\ref{thm:realmain} can be embedded with constant distortion in a Schatten-$1$ space of polynomial dimension. The construction is inspired by a result of Tsirelson~\cite{tsirelson} in quantum information. 

\begin{lemma}\label{lem:ub}
Let $n\geq 1$ be an integer and $0<\delta< 1$.
There exists a $(1+\delta)$ distortion embedding of the metric space induced by the $(n+2)$ points from Theorem~\ref{thm:realmain} into $\sch_1^d$ with $d = n^{O(1/\delta^2)}$.
\end{lemma}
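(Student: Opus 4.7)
The plan is to imitate the construction of the $(2n+2)$ extremal points of Theorem~\ref{thm:realmain} but with \emph{approximately} anti-commuting Clifford generators in place of the exact ones. The key idea, going back to Tsirelson, is that a small family of exact anti-commuting observables can be combined linearly against a large family of nearly orthogonal real unit vectors to produce many observables that approximately pairwise anti-commute.

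Concretely, the steps I would carry out are: (i) set $D = \lceil C\delta^{-2}\log n\rceil$ for a suitable absolute constant $C$ and, by a Johnson--Lindenstrauss type argument, choose unit vectors $g^{(1)},\ldots,g^{(n)}\in\R^D$ with $|\langle g^{(i)},g^{(j)}\rangle|\leq \delta/C'$ for all $i\neq j$; (ii) let $G_1,\ldots,G_D$ be the exact Hermitian anti-commuting square roots of $\Id$ on $\C^{d'}$ with $d'=2^{\lfloor D/2\rfloor}=n^{O(1/\delta^2)}$, constructed as in Section~1; (iii) set $C_i = \sum_{k=1}^{D} g^{(i)}_k G_k$. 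A direct calculation, using only the Clifford relations among the $G_k$ and $\|g^{(i)}\|_2=1$, shows that $C_i^*=C_i$, $C_i^2 = \Id$ exactly, $\Tr(C_i)=0$, and $\{C_i,C_j\} = 2\langle g^{(i)},g^{(j)}\rangle\Id$, so $(C_i\pm C_j)^2 = 2(1\pm\langle g^{(i)},g^{(j)}\rangle)\Id$.

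I would then define the embedding points by $O' = 0$, $\sigma' = \Id/d'$, $X'_i = (\Id+C_i)/(2d')$, $Y'_i = (\Id - C_i)/(2d')$ and verify that every pairwise Schatten-$1$ distance matches the corresponding target value from~\eqref{eq:cond-intro-1}--\eqref{eq:cond-intro-3}. All distances involving $O'$, $\sigma'$, and a single index $i$ come out \emph{exactly}, using that $(\Id\pm C_i)/2$ are orthogonal projections of trace $d'/2$. The only inexact distances are, for $i\neq j$, $\|X'_i-X'_j\|_{\schs}=\|Y'_i-Y'_j\|_{\schs} = \tfrac{1}{\sqrt 2}\sqrt{1-\langle g^{(i)},g^{(j)}\rangle}$ and $\|X'_i-Y'_j\|_{\schs}=\|Y'_i-X'_j\|_{\schs} = \tfrac{1}{\sqrt 2}\sqrt{1+\langle g^{(i)},g^{(j)}\rangle}$, which both lie in $\tfrac{\sqrt 2}{2}[\sqrt{1-\delta/C'},\sqrt{1+\delta/C'}]$.

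The one subtle point, and the hardest part of the write-up, is that these ``$+$'' and ``$-$'' distances straddle the target $\sqrt 2/2$ on opposite sides depending on the sign of $\langle g^{(i)},g^{(j)}\rangle$, so a bare embedding will always violate the one-sided inequality $\|y_i-y_j\|_{\schs}\le\|x_i-x_j\|_{\schs}$ demanded by the definition of distortion. This is resolved by rescaling every embedded point by the same scalar $c=1/\sqrt{1+O(\delta)}<1$, which pushes every output distance at or below the corresponding target and leaves a worst-case upper ratio of $\sqrt{(1+O(\delta))/(1-O(\delta))} = 1+O(\delta)$. After absorbing the implicit constants into $\delta$, this produces a $(1+\delta)$-distortion embedding in dimension $d' = n^{O(1/\delta^2)}$, as claimed.
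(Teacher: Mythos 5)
Your proposal is correct and follows essentially the same route as the paper: Johnson--Lindenstrauss to obtain nearly orthogonal directions, a linear combination with exact Clifford generators to produce observables whose anti-commutators have small operator norm, the identification $X'_i = (\Id+C_i)/(2d')$, $Y'_i = (\Id-C_i)/(2d')$ with the spectral projections scaled by $\Id/d'$, and a final global rescaling to control the two-sided distortion. The paper resolves your ``subtle point'' the same way, by a uniform rescaling of all operators (it scales up by $(1-\delta/4)^{-1}$ rather than down, but since distortion is invariant under global scaling the two are equivalent).
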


\begin{proof}
For simplicity, assume that $n$ is even. 
To show the lemma we construct real operators $O=0$, $\sigma = \frac{1}{d}\Id$, and $X_1,\ldots,X_n$ and $Y_1,\ldots,Y_n$ in $\sch_1^{d}$ that approximately satisfy the metric relations implied by the $(2n+2)$ points from Theorem~\ref{thm:main}, i.e., the operators $0$, $2^{-\frac{n}{2}} \Id$, and $2^{-\frac{n}{2}}P_{i,+}$, $2^{-\frac{n}{2}}P_{i,-}\in\sch_1^{2^{n/2}}$ defined in the introduction. 

By the Johnson-Lindenstrauss lemma~\cite{johnson1984extensions} there are $n$ unit vectors $x_1,\ldots,x_n \in \R^d$ for $d\leq C\ln n/\delta^2$ such that the inner products $|x_i\cdot x_j|\leq \delta/4$ for all $i\neq j$. Let $C_1,\ldots,C_d$ be a real representation of the Clifford algebra, i.e., real symmetric matrices such that $\{C_i,C_j\}=C_iC_j+C_jC_i=2\delta_{ij}\Id$ for all $i,j$, where $\delta_{ij}$ is the Kronecker coefficient. As already mentioned in the introduction, there always exists such a representation of dimension $2^{d'}$ for $d'\leq \lceil d/2\rceil +1$. For $i\in\{1,\ldots,n\}$ let $A'_i = \sum_{j=1}^d (x_i)_j C_j$. It is easily verified that $A'_i$ is symmetric such that $(A'_i)^2=\Id$, and moreover 
\begin{equation}\label{eq:ac-1a}
\forall i\neq j \in \{1,\ldots,n\}\;,\quad \big(A_i'- A_j'\big)^2 \,=\, \big(2- 2 \,x_i\cdot x_j\big)\Id\;.
\end{equation}
Let $A_i' = P_{i,+}'-P_{i,-}'$ be the spectral decomposition, 
and $X_i = 2^{-d'} P_{i,+}'$, $Y_i = 2^{-d'} P_{i,-}'$. Let $\sigma  = 2^{-d'} \Id$ and $O=0$. 
Then $\|\sigma-O\|_1 = 1$.
Using that $A_i$ has trace $0$, we also have
\[\|X_i-O\|_1 = \|Y_i-O\|_1 = \|\sigma-X_i\|_1=\|\sigma-Y_i\|_1=\frac{1}{2}\;,\]
and $\|X_i-Y_i\|_1=1$, for all $i\in\{1,\ldots,n\}$. 
It only remains to consider the distance between different $i$ and $j$. 
Using that $X_i-X_j = 2^{-d'-1} (A'_i-A'_j)$, the condition $|x_i\cdot x_j|\leq \delta/4$ for $i\neq j$, and~\eqref{eq:ac-1a}, it follows that 
\[\big(1-\frac{\delta}{4}\big)\frac{\sqrt{2}}{2}\leq \|X_i-X_j\|_1 \leq \big(1+\frac{\delta}{4}\big)\frac{\sqrt{2}}{2}\;.\] Similar bounds hold for pairs of the form $(X_i-Y_j)$ and $(Y_i-Y_j)$. Scaling all operators by $(1-\delta/4)^{-1}$ gives an embedding in $\sch_1^d$ 
with distortion at most
$(1+\delta/4)(1-\delta/4)^{-1} \leq (1+\delta)$. 
\end{proof}

\bibliographystyle{alpha}
\bibliography{connes,dimred_schatten}

\end{document}